\renewcommand{\a }{\alpha }
\renewcommand{\b }{\beta }
\renewcommand{\d}{\delta }
\newcommand{\D }{\Delta }
\newcommand{\e }{\varepsilon }
\renewcommand{\l }{\lambda }
\newcommand{\n }{\nabla }
\newcommand{\s }{\sigma }
\renewcommand{\o }{\omega }
\renewcommand{\O }{\Omega }
\newcommand{\ov}{\overline}
\def\o{\omega}
\def\p{\partial}
\newcommand{\wtilde }{\widetilde}
\newcommand{\be}{\begin{equation}}
\newcommand{\ee}{\end{equation}}
\newcommand{\R}{\mathbb{R}}
\newcommand{\N}{\mathbb{N}}
\newcommand{\dis}{\displaystyle}
\newtheorem{theorem}{Theorem}[section]
\newtheorem{proposition}[theorem]{Proposition}
\newtheorem{definition}{Definition}[section]
\newtheorem{example}[theorem]{Example}
\newcommand{\bpr}{\begin{proposition}}
\newcommand{\epr}{\end{proposition}}
\newcommand{\bex}{\begin{example}\rm}
\newcommand{\eex}{\end{example}}
\def\aa{\`a }
\begin{document}

\newtheorem{lem}{Lemma}[section]
\newtheorem{pro}[lem]{Proposition}
\newtheorem{thm}[lem]{Theorem}
\newtheorem{rem}[lem]{Remark}
\newtheorem{cor}[lem]{Corollary}
\newtheorem{df}[lem]{Definition}

\title[Singular Mean Field Equations]
{Non-degeneracy and uniqueness of solutions to singular mean field equations on bounded domains}

\author{Daniele Bartolucci, Aleks Jevnikar, Chang-Shou Lin}

\address{Daniele Bartolucci, Department of Mathematics, University of Rome {\it "Tor Vergata"},  Via della Ricerca Scientifica 1, 00133 Roma, Italy.}
\email{bartoluc@mat.uniroma2.it}

\address{Aleks Jevnikar, Department of Mathematics, University of Pisa, Largo Bruno Pontecorvo 5, 56127 Pisa, Italy.}
\email{aleks.jevnikar@dm.unipi.it}

\address{Chang-Shou Lin, Taida Institute for Mathematical Sciences and Center for Advanced Study in Theoretical Sciences, National Taiwan University, Taipei, Taiwan.}
\email{cslin@math.ntu.edu.tw}

\thanks{D.B. and A.J. are partially supported  by PRIN project 2012, ERC PE1\_11, 
"{\em Variational and perturbative aspects in nonlinear differential problems}" and 
by the Consolidate the Foundations project 2015 "{\em Nonlinear Differential Problems and their Applications}" (sponsored by Univ. of Rome "Tor Vergata").\\
D.B. is partially supported by the Mission Sustainability project 2017 "SEEA" (sponsored by Univ. of Rome "Tor Vergata") and 
by "Fondo per le attivit\aa base di ricerca" MIUR 2017.}

\keywords{Singular Liouville-type equations, Singular Mean field equations, Non-degeneracy, Uniqueness results, Alexandrov-Bol inequality}

\subjclass[2000]{35J61, 35R01, 35A02, 35B06.}

\begin{abstract}
The aim of this paper is to complete the program initiated in
\cite{suz}, \cite{CCL} and then carried out by several authors concerning non-degeneracy and uniqueness of solutions
to mean field equations. In particular, we consider mean field equations with general singular data on non-smooth domains. 
The argument is based on the Alexandrov-Bol inequality and on the eigenvalues analysis of linearized singular Liouville-type problems. 
\end{abstract}

\maketitle

\section{Introduction}

We are concerned with the following Liouville-type problem 
\begin{equation} \label{eq0}
\left\{ \begin{array}{ll}
\D v+\rho \dfrac{e^v}{\int_{\O}e^v\,dx} = 4\pi\dis{\sum_{j=1}^N} \a_j\d_{p_j} & \mbox{in } \O, \vspace{0.2cm}\\
v=0 & \mbox{on } \p \O,
\end{array}
\right.
\end{equation}
and with its generalization in \eqref{eq2} below,
where $\O\subset\R^2$ is a simply-connected, open and bounded domain, $\rho$ is a positive parameter, 
$\{p_1,\dots,p_N\}\subset\O$ and $\a_j>-1$ for $j=1,\dots,N$. The latter equation arises as a mean field limit of 
turbulent Euler flows \cite{clmp1, clmp2, CK, Kies}. Its counterpart on manifolds is related to the Electroweak and Chern-Simons 
self-dual vortices \cite{sy2, T0, yang} and to the 
prescribed Gaussian curvature problem on surfaces \cite{Troy, KW, CY1, CY2}. 
Due to its relevance in mathematics and physics the literature for equation \eqref{eq} is huge and 
we just mention \cite{barjga, B5, BdM2, BdMM, bcct, BJLY2, BJLY, bl, BLin3, BLT, BMal, bt, bt2, bm, cl2, CLin4, cl4, dem2, DJLW, dj, GM1, GM2, GM3, yy, ls, Lin1, linwang, Mal1, 
PT, suz, Za2} and the references quoted therein.  

\medskip

To describe the main features of the problem we first write \eqref{eq0} as follows. Letting $G_p$, $p\in\O$, be the Green function,
\begin{equation} \label{green}
	\left\{ \begin{array}{rll}
						-\D G_p(y)=&\d_p & \mbox{in } \O, \\
						G_p(y)=&0 & \mbox{on } \p\O,
					\end{array}
					\right.
\end{equation} 
we say that $v$ is a solution of \eqref{eq0} if $u=v+4\pi\sum_{j=1}^N \a_j G_{p_j}(x)$ is an $H^1_0(\O)$ weak solution of,
\begin{equation} \label{eq}
\left\{ \begin{array}{ll}
\D u+\rho \dfrac{V(x)e^u}{\int_{\O}V(x)e^u\,dx} = 0 & \mbox{in } \O, \vspace{0.2cm}\\
u=0 & \mbox{on } \p \O,
\end{array}
\right.
\end{equation} 
where 
\begin{align} \label{V}
\begin{split}
	&V(x)=\mbox{exp}\Big( -4\pi\sum_{j=1}^N \a_j G_{p_j}(x) \Big), \\
	&V>0 \ \mbox{in } \O\setminus\{p_1,\dots,p_N\}, \quad V(x)\simeq|x-p_j|^{2\a_j} \ \mbox{near } p_j.
\end{split}
\end{align}
To avoid technicalities we postpone the discussion concerning the regularity assumptions on $\O$ to the sequel. 
Problem \eqref{eq} admits a variational formulation and, by a suitable adaptation of well known arguments \cite{moser, Troy}, 
the corresponding functional is seen to be coercive for $\rho<8\pi\bigr(1+\min_j\{\a_j,0\}\bigr)$. 
Therefore, in this range, weak solutions of \eqref{eq} are obtained by direct minimization. 
On the other hand, the non-degeneracy and uniqueness of solutions was first proved in \cite{suz}, 
where the author solves the regular case  ($N=0$) for $\rho<8\pi$ and $\O$ smooth and simply-connected.  
Later, this result was improved in \cite{CCL} to include the more delicate critical value $\rho=8\pi$, and finally generalized to 
the case of a possibly multiply-connected domain $\O$ in \cite{BLin3}.
The argument was also refined in \cite{bl} to cover the singular case where $\rho\leq 8\pi$ and $\a_j>0$ for all $j=1,\dots,N$, $N\geq 1$. 
Let us point out that the latter results are sharp in the sense that uniqueness does not hold in general if $\rho>8\pi$, see for example \cite{CCL, BdM2}. 
More recently, in \cite{wz} the authors considered the case of one negative singularity, i.e. $\a_1\in(-1,0)$ and at least one positive singularity 
$\a_j>0$ for all $j=2,\dots,N$, $N\geq2$, 
proving non-degeneracy and uniqueness of solutions provided $\rho\leq 8\pi(1+\a_1)$.

\medskip

There is a common strategy in the above mentioned results,  
which is based on rearrangement 
type arguments and the Alexandrov-Bol inequality, whose aim is to show that the first eigenvalue 
of the linearized problem of Liouville-type equations \eqref{eq} is strictly positive for $\rho\leq 8\pi\bigr(1+\min_j\{\a_j,0\}\bigr)$. 
This point is not trivial since the nonlinear term in \eqref{eq} is {\em constrained} in $L^1(\O)$, whence, roughly speaking, the associated first eigenvalue "looks like" an
higher order eigenvalue of a suitable unconstrained problem. 
As a matter of fact, for multiple negative sources (and even for a single negative source with no extra positive sources), the latter strategy is missing. 
Therefore, the first aim of this paper is to complete the above program and to prove non-degeneracy of solutions for \eqref{eq0} for 
general singular data on non-smooth domains. Actually, we will develop the strategy for a much more general problem, see \eqref{eq2} below, 
in which the sum of Dirac deltas may be replaced by a general measure of bounded variation. This is somehow the more general form of the singularities which 
one can attach to the Dirichlet problem for \eqref{eq} as it naturally arises in the analysis of Alexandrov surfaces with bounded integral curvature, 
see \cite{bc} and references therein.

\medskip

Once we have the non degeneracy, then the uniqueness of solutions to \eqref{eq},\eqref{eq2} will follow 
by the implicit function theorem and some uniform estimates for solutions to \eqref{eq2} 
with $\rho$ below the uniqueness threshold (see Theorem \ref{thm} below). At least to our knowledge these 
estimates are known only in the model case \eqref{eq}-\eqref{V}, as first derived in  \cite{bm}, \cite{ls} and then in \cite{bt} and 
\cite{BM3} via blow up analysis. However, it is not easy to generalize such a refined blow-up argument to the general singular weight in \eqref{eq2}. 
Also, the needed estimates were first derived in \cite{band}, but only in the analytic framework, and in \cite{bc2} for weak subsolutions, 
but only for the model case \eqref{eq}-\eqref{V}.
We solve this problem here by proving some uniform estimates of independent interest for weak solutions of \eqref{eq2}, in the same spirit of 
\cite{band,bc2}.\\
We finally remark that the uniqueness part
concerning solutions to \eqref{eq2} was very recently obtained in \cite{bgjm} by a completely different argument (see also \cite{GM3}
for a similar application of the latter method). From this point of view, we come up with a new proof of the uniqueness 
based on the non-degeneracy of \eqref{eq2}. 

\medskip

In order to introduce the problem let us fix the setting and some notations. As in \cite{CCL} we consider the following set of non-smooth domains.
\begin{definition}\label{def-dom}
We say that $\O$ is {\it regular} if its boundary is of class $C^2$ but
for a finite number of points $\{Q_1 , . . . , Q_{N_0}\}\subset \partial \O$ such that the following conditions holds at each $Q_j$:
\begin{itemize}
\item[(i)] The inner angle $\theta_j$ of $\partial \O$ at $Q_j$ satisfies $0 < {\theta_j \neq \pi} < 2\pi$;
\item[(ii)] At each $Q_j$ there is an univalent conformal map from $B_\delta (Q_j) \cap \overline{\O}$ to
the complex plane $\mathbb{C}$ such that $\partial \O \cap B_\delta (Q_j)$ is mapped to a $C^2$ curve.
\end{itemize}
\end{definition}
Obviously any non-degenerate polygon is regular according to this definition. Letting now $\O$ be regular, we will be interested in the following problem,
\begin{equation} \label{eq2}
\left\{ \begin{array}{ll}
\D u+\rho \dfrac{h(x)e^u}{\int_{\O}h(x)e^u\,dx} = 0 & \mbox{in } \O, \vspace{0.2cm}\\
u=0 & \mbox{on } \p \O.
\end{array}
\right.
\end{equation}
where $h=e^H$ is such that,
$$
	H=\mathcal H_+-\mathcal H_-,
$$
with $\mathcal H_+, \mathcal H_-$ two superharmonic functions defined by
\begin{equation} \label{om+}
	\mathcal H_{\pm}(x) = \mathfrak h_{\pm}(x) + \int_\O G_x(y)\,d\mu_{\pm}(y),
\end{equation}
where $\mathfrak h_{\pm}\in C^2(\O)\cap C^0(\ov\O)$ are harmonic functions in $\O$ and $\mu_{\pm}$ are non-negative and mutually orthogonal 
measures of bounded total variation \underline{compactly supported} in $\O$. 
\begin{definition} \label{def1}
Let $\o\subseteq\O$ be a nonempty subdomain. We denote by $\wtilde\o$ the interior of the closure of the union of $\o$ with its 
``holes", that is, with the bounded component of the complement of $\o$ in $\R^2$. 
\end{definition}

\begin{definition} 
Let $h=e^H$ and let $\mu_+$ be defined as in \eqref{om+}. Let $\o\subseteq\O$ be a nonempty subdomain and let $\wtilde\o$ be given as in Definition \ref{def1}. We define $\a(\o)=\a(\o,h)\geq 0$ to be
\begin{equation} \label{a}
	\a(\o)=\dfrac{1}{4\pi}\,\mu_+(\wtilde \o).
\end{equation}
\end{definition}

Moreover, we assume that 
\begin{equation}\label{hyp1}
 \a(\O)<1\;\;\mbox{i.e.}\;\; \mu_+(\O)<4\pi.
\end{equation}
\noindent
We remark that this is a rather natural condition, as it is somehow the minimal requirement needed to ensure that $h=e^{H}\in L^1(\O)$, whenever 
the measure $\mu_+$ is concentrated in just a single Dirac delta, which is the model problem \eqref{eq}-\eqref{V} with $N=1$. 

\begin{rem}\label{remxz}
By our assumptions on $h$, it is not difficult to see that $h$ is uniformly bounded from above and from below in a 
sufficiently small neighbourhood of $\p\O$. 
As a consequence, we can argue as in Lemma 2.1 in \cite{CCL} and prove that indeed $u\in C^0(\ov\O)$.\\ 
Next, since $\mu_+(\O)<4\pi$, 
then there exists at most one point $x_0\in\O$ such that $\mu_+(x_0)\geq 2\pi$. As a consequence, by arguing as in \cite{bc}, 
if $he^u\in L^1(\O)$ where $u\in L^1(\O)$ is a solution of \eqref{eq2} just in the sense of distributions, then we gain
$u\in W^{2,q}(\O)$ for some $q>1$ and in particular, for each $r>0$ small enough, there exists $s_r>2$ such that
$u\in W^{2,s_r}(\O\setminus B_r(x_0))$. 
We will refer to the latter property by saying that $u\in W^{2,s,\mbox{\rm \scriptsize loc}}(\O\setminus\{x_0\})$ for some $s>2$. 
Clearly $u$ is  a strong solution of \eqref{eq2} and similar integrability properties are deduced also on the weight $h$, see Proposition 1.4 in \cite{bc}.

\end{rem}

Our main result is the following.
\begin{thm}\label{thm}
Let $\O$ be an open, bounded, simply-connected and regular (according to Definition \ref{def-dom}) domain. 
Let $h$ be such that $\a=\a(\O,h)<1$, with $\a(\O,h)$ defined as in \eqref{a}. 
Then,  for any $\rho\leq8\pi(1-\a)$, there exists at most one weak $H^1_0(\O)$ solution of \eqref{eq2}
and the first eigenvalue of the corresponding linearized problem is strictly positive.
\end{thm}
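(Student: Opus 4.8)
The plan is to follow the by-now classical scheme pioneered in \cite{suz, CCL} and refined in \cite{bl, BLin3}, adapting each step to the present general singular setting: (1) reduce non-degeneracy to a statement about the first eigenvalue; (2) prove that eigenvalue is strictly positive by a rearrangement argument combined with the Alexandrov--Bol inequality; and (3) deduce uniqueness from non-degeneracy via the implicit function theorem together with suitable uniform a priori estimates. The key analytic point throughout is that the nonlinearity $he^u/\int_\O he^u$ is constrained in $L^1(\O)$, so the relevant eigenvalue of the linearized operator behaves like a \emph{second} eigenvalue of an unconstrained Schr\"odinger-type problem; controlling it requires the sharp geometric inequality.

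\textbf{Step 1: the linearized problem.} Given a weak $H^1_0(\O)$ solution $u$ of \eqref{eq2}, write the linearized operator and observe that non-degeneracy (trivial kernel of the linearization) is equivalent to the strict positivity of the first eigenvalue $\sigma_1$ of
\begin{equation*}
\D \phi + \rho\,\frac{he^u}{\int_\O he^u\,dx}\,\phi - \rho\,\frac{he^u}{\int_\O he^u\,dx}\,\frac{\int_\O he^u\phi\,dx}{\int_\O he^u\,dx} = -\sigma\,\phi \quad \text{in } \O,\qquad \phi = 0 \text{ on } \p\O.
\end{equation*}
By the variational characterization of $\sigma_1$ and a standard reduction (subtracting the average against the weight $he^u$), the question becomes whether
\begin{equation*}
\int_\O |\n\phi|^2\,dx \;\geq\; \rho\,\frac{\int_\O he^u\phi^2\,dx}{\int_\O he^u\,dx}
\end{equation*}
holds for all $\phi\in H^1_0(\O)$ satisfying the constraint $\int_\O he^u\phi\,dx = 0$, with equality forcing $\phi\equiv 0$. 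The regularity from Remark \ref{remxz} ($u\in C^0(\ov\O)$, $u\in W^{2,s,\mathrm{loc}}(\O\setminus\{x_0\})$) guarantees the eigenfunctions are regular enough for the rearrangement below.

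\textbf{Step 2: rearrangement and Alexandrov--Bol.} This is the heart of the matter. Let $w=\rho he^u/\int_\O he^u\,dx$, so that $-\D u = w$ and $\int_\O w\,dx = \rho \leq 8\pi(1-\a)$. For a level set $\o = \{u > t\}$ of $u$ (or of a first eigenfunction), apply the Alexandrov--Bol inequality to the conformal metric $e^u|dx|^2$ on the \emph{filled} domain $\wtilde\o$ of Definition \ref{def1}; the curvature defect is exactly governed by $\mu_+(\wtilde\o) = 4\pi\a(\o)$, which is why $\a(\o)$ was defined this way. This yields an isoperimetric-type bound of the form $\ell(\p\o)^2 \geq \tfrac12\,m(\o)\,\bigl(8\pi(1-\a(\o)) - m(\o)\bigr)$ where $m(\o)=\int_\o w\,dx$, valid under the hypothesis that $m(\o)$ stays below the threshold. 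Feeding this into the standard coarea/symmetrization machinery (as in \cite{CCL, bl}) one symmetrizes the eigenvalue problem onto a disk with a singular radial weight of exponent $\a$; the constrained eigenvalue of the symmetrized problem can be computed (or estimated) explicitly, and is nonnegative precisely when $\rho\leq 8\pi(1-\a)$. Equality analysis—tracing back the equality cases in Alexandrov--Bol and in the rearrangement inequalities—forces $\phi\equiv 0$, giving strict positivity. The main obstacle here, and the genuinely new contribution over the Dirac-delta case, is running this argument for a general bounded-variation measure $\mu_+$ with possibly a diffuse part: one must check that the filled-domain mass $\a(\o)$ is the right monotone quantity, that the Alexandrov--Bol inequality (and its equality case) survives with this curvature defect, and that the at-most-one heavy atom $x_0$ with $\mu_+(x_0)\geq 2\pi$ does not obstruct the symmetrization—this last point is where the regularity statement $u\in W^{2,s}$ away from $x_0$ in Remark \ref{remxz} is used.

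\textbf{Step 3: from non-degeneracy to uniqueness.} Once $\sigma_1>0$ for every solution at every $\rho\leq 8\pi(1-\a)$, the solution set is a one-dimensional $C^1$ manifold parametrized by $\rho$ via the implicit function theorem, and it cannot bifurcate or turn back. To close the argument one needs that this branch, starting from the unique minimal solution obtained by direct minimization in the coercive range $\rho < 8\pi(1+\min_j\{\a_j,0\})$, extends without blow-up up to $8\pi(1-\a)$; this requires the uniform a priori estimates for weak $H^1_0$ solutions of \eqref{eq2} announced in the introduction, in the spirit of \cite{band, bc2}, which rule out concentration below the threshold. Combining the connectedness of the branch with non-degeneracy at each point yields at most one solution for each $\rho\leq 8\pi(1-\a)$, completing the proof.
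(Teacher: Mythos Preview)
Your three-step scheme matches the paper's overall architecture, and Steps 1 and 3 are essentially what the paper does (bifurcation from $(u,\rho)=(0,0)$ plus the uniform bound of Proposition~\ref{pro:est}). Step 2, however, is under-specified in two places where the real work lies, and as written it would not close.

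First, the constraint $\int_\O he^u\phi\,dx=0$ cannot simply be carried through the symmetrization by ``computing the constrained eigenvalue of the symmetrized problem explicitly.'' The paper's device is to set $w=u+\log\rho-\log\int_\O he^u$ and shift the eigenfunction, $\phi=\tilde\phi-c_0$ with $c_0=\int he^u\tilde\phi/\int he^u$, so that $\phi$ solves $-\Delta\phi=he^w\phi$ with \emph{non-zero} boundary data $\phi|_{\p\O}=c_0$ and zero weighted mean. One then splits into $c_0=0$ (where $\phi$ is an ordinary second eigenfunction, handled by a nodal-domain count and Proposition~\ref{pro-nodal}) and $c_0<0$ (Proposition~\ref{pro-nodal2}). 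The latter requires a decreasing rearrangement on $\{\phi>c_0\}$ and, when $\{\phi<c_0\}\neq\emptyset$, an additional \emph{annular} rearrangement glued to it; the resulting radial competitor is then compared against the explicit eigenfunction $\psi(r)=\frac{8-r^{2(1-\a)}}{8+r^{2(1-\a)}}$ of the model operator $-\Delta-|x|^{-2\a}e^{U_\a}$ via an ODE Wronskian/Sturm argument to force $K^*>1$ (subcritical) or $K^*=1$ (critical). None of this is visible in your sketch, and the statement that the symmetrized constrained eigenvalue ``can be computed explicitly'' hides precisely this comparison, which is the technical core.

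Second, your equality analysis for the critical value $\rho=8\pi(1-\a)$ is incomplete. In the paper, equality in Alexandrov--Bol and in the Cauchy--Schwarz step \eqref{c-s} forces both nodal domains $\O_\pm$ to be simply connected and yields $he^w=c_t|\nabla\phi|^2$ on level sets; since the nodal line must then meet $\p\O$, one uses the $C^2$-except-finitely-many-corners regularity of Definition~\ref{def-dom} together with Remark~\ref{remxz} (boundedness of $h$ near $\p\O$) to invoke the Hopf boundary lemma and reach a contradiction. This is exactly where the boundary regularity hypothesis is consumed, and it does not follow from a generic ``trace back the equality cases'' argument; in the earlier works \cite{bl,wz} this step was avoided only because a positive Dirac singularity was available. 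Your proposal does not mention Hopf's lemma or the role of the boundary assumption, so the critical case is left open.
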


\medskip

Observe that, by choosing $h(x)\equiv V(x)$, with $V$ given in \eqref{V}, and letting
$$
J=\bigr\{ j\in\{1,\dots,N\}\, : \, \a_j\in(-1,0) \bigr\},
$$
then $\a=\a(\O,h)>0$, as defined in \eqref{a}, is given by 
\begin{equation} \label{a1}
	\a=-\sum_{j\in J}\a_j.
\end{equation}
Then, Theorem \ref{thm} yields the following immediate Corollary for solutions of \eqref{eq}.
\begin{cor} \label{cor}
Let $\O$ be an open, bounded, simply-connected and regular (according to Definition \ref{def-dom}) domain. 
Let $\a<1$ be given as in \eqref{a1}. 
Then,  for any $\rho\leq8\pi(1-\a)$, there exists at most one weak $H^1_0(\O)$ solution of \eqref{eq} and 
the first eigenvalue of the corresponding linearized problem is strictly positive.
\end{cor}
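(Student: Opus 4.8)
Corollary \ref{cor} is the specialization of Theorem \ref{thm} to the model weight $h\equiv V$ of \eqref{V}: for such $V$ one has $H=-4\pi\sum_j\alpha_j G_{p_j}$, hence $-\Delta H=-4\pi\sum_j\alpha_j\delta_{p_j}$ and thus $\mu_+=\sum_{j\in J}(-4\pi\alpha_j)\,\delta_{p_j}$, so that $\alpha(\Omega,V)=\tfrac{1}{4\pi}\mu_+(\Omega)=-\sum_{j\in J}\alpha_j$, in agreement with \eqref{a1}, and the hypothesis $\rho\le 8\pi(1-\alpha)$ is exactly that of Theorem \ref{thm}. So the content lies entirely in Theorem \ref{thm}, and I outline a plan for it in two parts: \textbf{(A)} non-degeneracy, i.e.\ strict positivity of the first eigenvalue of the linearized problem at any $H^1_0(\Omega)$ solution $u$ of \eqref{eq2} with $\rho\le 8\pi(1-\alpha)$; and \textbf{(B)} the passage from (A) to uniqueness. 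For (A), writing $V_u=\rho\, h e^u/\!\int_\Omega h e^u$ so that $-\Delta u=V_u$, $u=0$ on $\partial\Omega$, $\int_\Omega V_u=\rho$ (with $u$ enjoying the regularity of Remark \ref{remxz}), the linearized operator at $u$ is $\phi\mapsto\Delta\phi+V_u\phi-\tfrac{V_u}{\rho}\int_\Omega V_u\phi$, and inspecting the second variation of the functional of \eqref{eq2} the assertion is equivalent to $\int_\Omega|\nabla\phi|^2-\int_\Omega V_u\phi^2+\tfrac1\rho\big(\int_\Omega V_u\phi\big)^2>0$ for all $\phi\in H^1_0(\Omega)\setminus\{0\}$. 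The plan is to reduce this, after decomposing $\phi$ along the constraint $\int_\Omega V_u\phi=0$, to a statement about the unconstrained weighted eigenvalue problem $-\Delta\phi=\sigma V_u\phi$ in $\Omega$, $\phi|_{\partial\Omega}=0$ — heuristically, the constrained first eigenvalue ``is'' the second eigenvalue $\sigma_2$ of this problem — namely $\sigma_2>1$, with the borderline $\rho=8\pi(1-\alpha)$ requiring the weaker $\sigma_2\ge 1$ together with a non-attainment argument in the spirit of \cite{CCL}. To prove $\sigma_2\ge 1$ I would argue by contradiction: if $\sigma_2<1$ the associated eigenfunction has two nodal domains $\omega_1,\omega_2$, on each of which the first Dirichlet eigenvalue of $-\Delta=\sigma V_u$ equals $\sigma_2<1$; since $\int_{\omega_1}V_u+\int_{\omega_2}V_u\le\rho$ and $\alpha(\omega_i)\le\alpha(\Omega)=\alpha$ by monotonicity of \eqref{a}, one of them, say $\omega_1$, satisfies $\int_{\omega_1}V_u\le\rho/2\le 4\pi(1-\alpha)\le 4\pi(1-\alpha(\omega_1))$.

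It then remains to show that $\int_\omega V_u\le 4\pi(1-\alpha(\omega))$ forces the first eigenvalue of $-\Delta=\sigma V_u$ on $\omega$ to be $\ge 1$, contradicting the previous sentence. The key is that $e^{2\psi}|dx|^2$, with $\psi=\tfrac12(u+H)$, is, away from the singular set, a metric of constant positive Gaussian curvature $\rho/(2\!\int_\Omega h e^u)$ carrying conical masses $\tfrac12\mu_\pm$; one runs the Bandle-type decreasing rearrangement of the first eigenfunction on $\omega$ with respect to this metric. The essential input is the \emph{singular Alexandrov--Bol inequality} for $e^{2\psi}|dx|^2$, bounding the metric perimeter of a sublevel set $\omega'$ from below by $\tfrac12 m(\omega')\big(8\pi(1-\alpha(\omega'))-m(\omega')\big)$, with $m(\omega')$ the metric area and $4\pi\alpha(\omega')=\mu_+(\widetilde{\omega'})$ the enclosed positive singular mass; feeding this into the co-area formula collapses the eigenvalue problem on $\omega$ to a one-dimensional problem whose extremal profile is the explicit radial singular Liouville solution, for which the first eigenvalue is $\ge 1$ precisely when the total metric mass is $\le 4\pi(1-\alpha(\omega))$. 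This closes the contradiction and gives (A) for $\rho<8\pi(1-\alpha)$; at $\rho=8\pi(1-\alpha)$ the equality case of Bol is rigid, and one excludes $\sigma_2=1$ from being ``constrained-attained'' exactly as in \cite{CCL}.

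For Part (B), given (A) the implicit function theorem makes the solution set of \eqref{eq2} a smooth branch, locally unique in $u$, as $\rho$ varies; for $\rho$ small the functional is coercive (as for \eqref{eq}, in the range $\rho<8\pi(1-\alpha)$) and its minimizer is easily seen to be the unique solution, so the branch is single-valued near $\rho=0^+$. To propagate uniqueness up to $8\pi(1-\alpha)$ one needs a uniform a priori bound for \emph{all} weak solutions of \eqref{eq2} with $\rho$ in a right neighbourhood of any $\rho_0\le 8\pi(1-\alpha)$ — this is the new ingredient, to be established in the spirit of \cite{band,bc2} but for the general weight $h=e^H$, a uniform $L^\infty$ (hence, by elliptic theory off $x_0$, $C^0$) bound. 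Combining the a priori estimate with non-degeneracy through a standard continuation/degree argument forces the solution set over $(0,8\pi(1-\alpha)]$ to be a single $C^1$ curve parametrized by $\rho$; thus \eqref{eq2} has at most one weak solution for each such $\rho$, and along the curve the linearized first eigenvalue stays $>0$ by (A).

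The main obstacle is the core estimate: extending the Alexandrov--Bol inequality and the attached rearrangement to a metric $e^{2\psi}|dx|^2$ carrying \emph{both} positive and negative conical masses produced by general measures $\mu_\pm$ — not merely the power weight \eqref{V} — and extracting the sharp constant $8\pi(1-\alpha)$; this is precisely what the earlier strategy could not handle for negative sources. Secondary delicate points are the rigidity/non-attainment analysis at the critical parameter $\rho=8\pi(1-\alpha)$, and, for Part (B), obtaining the uniform estimate up to the critical parameter for the general weight, where the refined blow-up analysis available for the model case \eqref{eq}--\eqref{V} does not transfer and a more robust route is needed.
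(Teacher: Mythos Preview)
Your reduction of the Corollary to Theorem \ref{thm} is correct, and your plan for Part (B) matches the paper's. The gap is in Part (A).

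You propose to reduce the positivity of the second variation $Q(\phi)=\int_\Omega|\nabla\phi|^2-\int_\Omega V_u\phi^2+\tfrac1\rho\bigl(\int_\Omega V_u\phi\bigr)^2$ on $H^1_0(\Omega)$ to the statement $\sigma_2>1$ for the Dirichlet problem $-\Delta\phi=\sigma V_u\phi$, via a ``decomposition along the constraint $\int V_u\phi=0$''. This reduction does not go through. If $\tilde\phi\in H^1_0(\Omega)$ lies in the kernel of the linearized operator, then subtracting its weighted mean gives $\phi=\tilde\phi-\tfrac1\rho\int V_u\tilde\phi$ satisfying $-\Delta\phi=V_u\phi$, $\int V_u\phi=0$, but now $\phi|_{\partial\Omega}=c_0:=-\tfrac1\rho\int V_u\tilde\phi$, which is \emph{generically nonzero}. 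So $\phi\notin H^1_0(\Omega)$, and you cannot invoke the Dirichlet spectrum. (Your ``heuristic'' identification also points the wrong way: by Courant--Fischer, the first eigenvalue under the codimension-one constraint $\int V_u\phi=0$ is at most $\sigma_2$, not at least $\sigma_2$.) The case $c_0=0$ is indeed settled by $\sigma_2>1$ exactly as you outline; but the case $c_0<0$ is the heart of the matter.

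In the paper this is Proposition \ref{pro-nodal2}, and it is not a small add-on: one first shows (via your nodal-domain argument) that $\int_{\Omega_+}V_u\ge 4\pi(1-\alpha)$, then performs a two-piece rearrangement---decreasing on $\{\phi>c_0\}$ and \emph{annular} on $\{\phi<c_0\}$ (when the latter is nonempty)---onto the explicit radial profile $|x|^{-2\alpha}e^{U_\alpha}$ on $B_{R_0}$ (with $R_0=+\infty$ at the critical parameter), obtaining a radial competitor $\phi_*$ with the same mass and mean-zero constraints and no larger Dirichlet energy. One is then reduced to a constrained radial eigenvalue problem $K^*$, and a Sturm-type comparison of the radial minimizer $\psi^*$ against the explicit $\psi(x)=\frac{8-|x|^{2(1-\alpha)}}{8+|x|^{2(1-\alpha)}}$ forces $K^*>1$ (subcritical) or $K^*=1$ (critical), the latter then being excluded by the rigidity of Alexandrov--Bol. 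None of this is captured by ``$\sigma_2>1$''.

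A minor point: at $\rho=8\pi(1-\alpha)$ the paper still obtains the strict inequality $\hat\nu_2>0$; the rigidity analysis is used within the proof to rule out the borderline subcase $\int_{\Omega_\pm}V_u=4\pi(1-\alpha)$, not to weaken the conclusion to $\sigma_2\ge1$.
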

As remarked above, the functional corresponding to \eqref{eq} is coercive for $\rho<8\pi\bigr(1+\min_j\{\a_j,0\}\bigr)$. Therefore, 
we have existence and uniqueness in Corollary \ref{cor}  if either $|J|\geq 2$ and $\rho\leq8\pi(1-\a)$ or if $|J|=1$ 
and $\rho<8\pi(1-\a)\equiv 8\pi(1+\a_1)$, $\a_1<0$, or if $|J|=0$ and $\rho<8\pi$.
Now, if $|J|=N=1$, $p_1=0$ and $\O=B_1(0)$, then solutions to \eqref{eq} (which are radial and well known in this particular case) exist if and only if 
$\rho<8\pi(1-\a)\equiv 8\pi(1+\a_1)$, showing that our existence and uniqueness result is sharp in this case. On the other side,
we stress that both our result and the one in \cite{bgjm} yield to the same uniqueness threshold which, for $|J|\geq 2$, is lower 
than the subcritical existence threshold $8\pi\bigr(1+\min_j\{\a_j,0\}\bigr)$. This motivates the following interesting open problem: 

\medskip

\textbf{Open problem.} Does uniqueness of solutions for \eqref{eq}-\eqref{V} hold 
for 
$$
\rho \in\bigr(8\pi(1-\a),8\pi\bigr(1+\min_j\{\a_j,0\}\bigr)\bigr), \quad \a>0,
$$ 
with $|J|\geq 2$?

\medskip

The strategy to prove Theorem \ref{thm} is inspired by the one in \cite{CCL}, 
with several non-trivial improvements needed to deal with general singular data (given by some measures as in \eqref{om+}) and non-smooth domains.
To this end, the first tool we need is an Alexandrov-Bol's inequality for solutions of \eqref{eq2} suitable for our setting. Such an inequality was first proved
in the analytical framework in \cite{band} and more recently generalized to the weak setting in \cite{bc2, bc}.
However what we need here is a more general statement which allows one to push the inequality, still in this weak setting, 
up to the (non-smooth) boundary of the domain. At least to our knowledge this is still missing and this is why we will derive it here.\\ 
Next, we perform a rearrangement argument jointly with a comparison method to 
gather some information about the eigenvalues of the linearized problem for \eqref{eq2}. Here, we present some novelties.\\  
On one side we generalize the argument in \cite{CCL} (introduced to treat the regular problem $N=0$ in the more subtle case $\rho=8\pi$) to 
the singular setting and on the other side we extend it to the sub-critical case $\rho<8\pi(1-\a)$. We point out that this step 
is new also for the regular problem as it simplifies the original argument in \cite{suz}. Special attention is also paid for the case 
$\rho=8\pi(1-\a)$ where one has to exploit the characterization of the equality in the Alexandrov-Bol inequality recently obtained
in \cite{bc} which, roughly speaking, asserts that equality can be attained only on simply-connected subdomains $\o\subseteq\O$ such 
that the full measure $\mu_+$ in \eqref{a} is concentrated in just one Dirac delta. Moreover, 
from the equality case in some Cauchy-Schwartz inequality, we will use the boundary regularity assumptions on the domain, 
see Definition \ref{def-dom}, in order to eventually apply the Hopf boundary lemma and get the desired conclusion. 
This step is done in the same spirit of \cite{CCL} and the boundary regularity assumptions are crucial at this point. 
This is in striking contrast with \cite{bl, wz}, where one can exploit the presence of positive singularities to readily conclude the argument 
even under weaker boundary regularity assumptions.

\medskip

This paper is organized as follows. In section \ref{sec-linearized} we introduce the Alexandrov-Bol inequality and analyze the  linearized 
Liouville-type problem. In section \ref{sec-proof} we prove some uniform estimates for solutions to \eqref{eq2} and then deduce Theorem \ref{thm}.

\medskip

\section{Eigenvalues analysis for Liouville-type linearized problems}  \label{sec-linearized}

In this section we first introduce the Alexandrov-Bol inequality suitable for our setting and then carry out an eigenvalues analysis for 
Liouville-type linearized problems which will be crucially used in the next section in the proof of Theorem~\ref{thm}. 

\medskip

\begin{definition} \label{def-simple}
We say that an open set $\O_0\subset\R^2$ is simple if $\p\O_0$ is a rectifiable Jordan curve whose interior is $\O_0$.
\end{definition}

Clearly any regular domain according to Definition \ref{def-dom} is also simple.
Next, given $\a\in[0,1)$, $\l>0$ we set
\begin{equation} \label{U}
	U_{\l,\a}(x) = \ln\left( \dfrac{ \l(1-\a) }{ 1+\frac{\l^2}{8}|x|^{2(1-\a)}} \right)^2,
\end{equation}
which satisfies
$$
	\D U_{\l,\a} + |x|^{-2\a}e^{U_{\l,\a}}=0 \mbox{ in }\R^2\setminus \{0\}.
$$
The following version of the Alexandrov-Bol inequality was first proved in the analytical framework in \cite{band} and more recently generalized to the weak setting in \cite{bc2, bc}. 
Actually, if $\omega$ in the statement is a relatively compact subset of $\O_0$, then the result is just a particular case of Theorem 1.5 in \cite{bc}. 
\begin{pro}\label{bol}
Let $\O_0\subset\R^2$ be a simple domain according to Definition \ref{def-simple}. Let $x_0\in\O_0$ be fixed as in Remark \ref{remxz} and let 
$w\in W^{2,s,\mbox{\rm \scriptsize loc}}(\O_0\setminus\{x_0\})\cap W^{2,q}(\O_0)\cap C^0(\ov\O_0)$ for some $s>2$ and some $q>1$, satisfy
$$
	\D w + h(x)e^{w}= 0  \mbox{ in } \O_0,
$$
where $h=e^H$ is such that $\a(\O_0,h)$ (as defined in \eqref{a}) satisfies $\a(\O_0,h)<1$.
Let $\omega\subseteq \O_0$ be any open subdomain such that $\p\o$ is a finite union of rectifiable Jordan curves and let $\a(\o)=\a(\o,h)$.  Then it holds,
\begin{equation} \label{bol-ineq}
	\left( \int_{\p\omega}\left(h(x)e^{w}\right)^{\frac 12}\,d\s \right)^2 \geq \frac 12 \left( \int_\omega h(x)e^w\,dx \right)\left( 8\pi(1-\a(\o))-\int_\omega h(x)e^w\,dx \right).
\end{equation}
Moreover, the equality holds if and only if (modulo conformal transformations)  
$\o=B_\d(0)$ for some $\d>0$, $h(x)e^{w}\equiv |x|^{-2\a}e^{U_{\l,\a}}$ for some $\l$ 
where $U_{\l,\a}$ is defined in \eqref{U}, $\mu_+=-\Delta H =4\pi \a \delta_{p=0}$ in $\o$ and $\a=\a(\o)$. In particular, if $\o$ is not simply-connected, then the inequality is always strict.
\end{pro}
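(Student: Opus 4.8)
The plan is to bootstrap from the relatively compact case. By Theorem~1.5 of \cite{bc} the whole statement --- the inequality \eqref{bol-ineq}, the equality characterization, and the strictness for non-simply-connected $\o$ --- already holds whenever $\ov\o$ is a compact subset of $\O_0$, so the only thing to do is to handle $\o$ with $\ov\o\cap\p\O_0\neq\emptyset$, and the idea is to exhaust such an $\o$ from the inside and pass to the limit. The structural fact that makes this possible is that $g:=h(x)e^{w}\,|dx|^2$ is a conformal metric on $\O_0$ whose density $he^w$ lies in $L^1(\O_0)$ and is, by Remark~\ref{remxz}, bounded from above and below near $\p\O_0$ (where the compactly supported $\mu_+$ plays no role). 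Let
$$
\o^{(s)}:=\{x\in\o:\ \mathrm{dist}_g(x,\p\o)>s\},\qquad s>0,
$$
be the inner parallel subdomains of $\o$ with respect to $g$. Since the straight segment joining a point $x\in\o$ to $\p\O_0$ leaves $\o$ within Euclidean distance $\mathrm{dist}(x,\p\O_0)$, along which $he^w$ is bounded once $x$ is near $\p\O_0$, the quantity $\mathrm{dist}_g(x,\p\o)$ is small whenever $x$ is close to $\p\O_0$; hence each $\o^{(s)}$ is relatively compact in $\O_0$. Moreover $\o^{(s)}\uparrow\o$ as $s\downarrow0$, and for a.e.\ $s>0$ the level set $\p\o^{(s)}$ is a finite union of rectifiable Jordan curves.

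Next I would apply \eqref{bol-ineq} to each $\o^{(s)}$ via \cite{bc} and let $s\downarrow0$. Writing $A(s):=\int_{\o^{(s)}}he^w\,dx=\mathrm{area}_g(\o^{(s)})$ and $L(s):=\int_{\p\o^{(s)}}(he^w)^{1/2}\,d\s=\mathrm{length}_g(\p\o^{(s)})$, monotone convergence gives $A(s)\uparrow\int_\o he^w\,dx$, and $\a(\o^{(s)})\to\a(\o)$ since $\mu_+$ is compactly supported in $\O_0$ and $\wtilde{\o^{(s)}}\uparrow\wtilde\o$. For the perimeter term, the coarea formula applied to the $1$-Lipschitz function $\mathrm{dist}_g(\cdot,\p\o)$ gives $\mathrm{area}_g(\o)-\mathrm{area}_g(\o^{(s)})=\int_0^sL(\tau)\,d\tau$, and since $\p\o$ is a finite union of rectifiable Jordan curves its $g$-Minkowski content equals $\int_{\p\o}(he^w)^{1/2}\,d\s$; hence $\liminf_{s\downarrow0}L(s)\leq\int_{\p\o}(he^w)^{1/2}\,d\s$. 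Picking $s_n\downarrow0$ with $L(s_n)\to\ell\leq\int_{\p\o}(he^w)^{1/2}\,d\s$ and passing to the limit in the inequality for $\o^{(s_n)}$ yields
$$
\Big(\int_{\p\o}(he^w)^{1/2}\,d\s\Big)^2\ \geq\ \ell^2\ \geq\ \tfrac12\Big(\int_\o he^w\,dx\Big)\Big(8\pi(1-\a(\o))-\int_\o he^w\,dx\Big),
$$
which is exactly \eqref{bol-ineq}.

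For the equality case, suppose \eqref{bol-ineq} holds with equality for $\o$. Then the displayed chain of inequalities is saturated, so in particular $L(s_n)^2-\tfrac12A(s_n)\big(8\pi(1-\a(\o^{(s_n)}))-A(s_n)\big)\to0$, i.e.\ the Alexandrov-Bol defect $S(\o^{(s_n)})\to0$. Since the inner parallel subdomains of $\o^{(s_0)}$ coincide with $\{\o^{(s)}\}_{s>s_0}$, inspecting the proof of \cite{bc} one sees that the relevant (nonnegative) curvature/Cauchy-Schwartz defect density depends only on $\o$ and $g$, with $S(\o^{(s_0)})$ equal to its integral over $(s_0,s_{\max})$ ($s_{\max}$ the $g$-inradius of $\o$), hence nonincreasing in $s_0$; together with $S(\o^{(s_n)})\to0$ this forces $S(\o^{(s)})=0$ for every $s>0$. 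By the equality characterization of \cite{bc}, each $\o^{(s)}$ is then, modulo a conformal transformation, a disk $B_\delta(0)$ on which $he^w\equiv|x|^{-2\a}e^{U_{\l,\a}}$ (with $U_{\l,\a}$ as in \eqref{U}) and $\mu_+=4\pi\a\,\delta_{p=0}$, $\a=\a(\o)$; letting $s\downarrow0$ these descriptions are mutually compatible and patch together to the same description of $\o=\bigcup_s\o^{(s)}$, an increasing exhaustion by such disks inside the bounded set $\O_0$. In particular $\o$ is simply-connected, which is the final assertion.

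The main obstacle is the perimeter limit in the second step: one has to rule out that $\mathrm{length}_g(\p\o^{(s)})$ overshoots $\int_{\p\o}(he^w)^{1/2}\,d\s$ as $s\downarrow0$, which is exactly where the rectifiability of $\p\o$, together with the controlled behaviour of the metric $g$ near the non-smooth boundary $\p\O_0$ and near the (finitely many) atoms of $\mu_+$, has to be used carefully. A secondary delicate point is the ``intrinsic defect density'' structure invoked in the equality case --- that is, making rigorous the claim that equality in \eqref{bol-ineq} is inherited by the inner parallel subdomains $\o^{(s)}$.
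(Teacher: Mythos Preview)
Your approach differs substantially from the paper's. The paper does not approximate $\omega$ from the inside; it works directly on $\omega$ by solving the Dirichlet problem for the harmonic function $g$ with $g=w$ on $\partial\omega$ (this is where the rectifiability of $\partial\omega$ and $w\in C^0(\overline{\Omega_0})$ enter), setting $\eta=w-g$, and then running the level-set/rearrangement machinery of \cite{bc} on $\eta$ together with Huber's isoperimetric inequality. Since $\eta>0$ in $\omega$ and $\eta=0$ on $\partial\omega$, the super-level sets $\{\eta>t\}$ for $t>0$ are automatically compactly contained in $\omega$, so no boundary-approximation step is needed at all; the monotone quantity $P(s)$ is integrated from $0$ to $\mu(0)$ and Huber's inequality is applied once more on $\partial\omega$ itself to finish.

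Your scheme has a genuine gap in the equality case. The ``intrinsic defect density'' and the monotonicity of $S(\omega^{(s)})$ that you invoke do not come from \cite{bc}: there the monotone object is $P(s)$, built from the level sets of $\eta=w-g$, and $g$ is the harmonic extension of $w|_{\partial\omega}$ on the \emph{fixed} domain $\omega$. If you replace $\omega$ by $\omega^{(s_0)}$ the harmonic extension changes, so the foliations by $\eta$-level sets for different $s_0$ are mutually incompatible and there is no common defect density being integrated over nested intervals. Consequently $S(\omega^{(s_n)})\to 0$ along a subsequence does not force $S(\omega^{(s)})=0$ for every $s>0$, and the rigidity conclusion cannot be drawn this way. (Even the inequality step has the technical issue you yourself flag --- identifying the $g$-Minkowski content of the rectifiable set $\partial\omega$ with its $g$-length when $he^w$ is merely in $L^1$ with a possible singularity at $x_0$; the paper's direct argument sidesteps this entirely.)
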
 

\begin{proof}
The proof can be  worked out by a step by step adaptation of the one provided in \cite{bc} with minor changes borrowed from  Theorem 4.1 in \cite{bc2}. 
Since $\omega\subseteq \O_0$ is such that $\p\o$ is a finite union of rectifiable Jordan curves and since $w\in C^0(\ov\o)$, 
then there exists $g\in C^0(\ov\o)$ such that $\D g=0$ in $\o$ and $g=w$ on $\p\o$. Set $\eta=w-g$,
$\eta\in W^{2,s,\mbox{\rm \scriptsize loc}}(\o\setminus\{x_0\})\cap W^{2,q}(\o)\cap C_0^0(\ov\o)$ for some $s>2$ and some $q>1$, which satisfies 
\begin{equation} \label{eta}
\left\{ \begin{array}{ll}
\D\eta +h(x)e^ge^\eta=0 & \mbox{in } \o, \vspace{0.2cm}\\
 \eta=0 & \mbox{on } \p \o.
\end{array}
\right.
\end{equation}
By using the strong maximum principle for weak solutions one can prove that 
$$
	\eta(x)>0 \ \forall x\in\o \quad \mbox{and} \quad \eta(x)=0 \ \mbox{iff} \ x\in\p\o.
$$
With this at hand the strategy follows the one introduced in \cite{bc}. 
Since we will need some ingredients later on we sketch here the main steps and refer to \cite{bc2, bc, bl} for full details. We first set,
$$
	\O(t)=\{x\in\o \,:\, \eta(x)>t\}, \quad \Gamma(t)=\p\O(t), \quad \mu(t)=\int_{\O(t)}h(x)e^g\,dx 
$$
and observe that $\O(0)=\o$. By the co-area formula one has for a.e. $t\geq0$,
\begin{equation} \label{co-area}
	\frac{d\mu(t)}{dt}=-\int_{\Gamma(t)} \frac{h(x)e^g}{|\n\eta|}\,d\s.
\end{equation}
Now, for $s\geq0$ we define the rearrangement $\eta^*$ of $\eta$,
\begin{equation} \label{eta*}
	\eta^*(s)=\bigr|\{ t\geq0 \,:\, \mu(t)>s \}\bigr|,
\end{equation}
where $|E|$ is the Lebesgue measure of a Borel set $E\subset\R$. It is not difficult to see that $\eta^*$ is the inverse of $\mu$. 
Moreover, in \cite{bc} it is shown that $\eta^*$ is locally Lipschitz. On the other hand, by \eqref{co-area} one has for a.e. $s\geq0$,
\begin{equation} \label{co-area2}
	\frac{d\eta^*(s)}{ds}=-\left(\int_{\Gamma(\eta^*(s))} \frac{h(x)e^g}{|\n\eta|}\,d\s\right)^{-1}.	
\end{equation} 
We next define for $s\geq0$,
$$
	F(s)=\int_{\O(\eta^*(s))} h(x)e^\eta e^g\,dx,
$$
and observe that $F(0)=0$ and $F(\mu(0))=M(\o)$, where we set
\begin{equation} \label{M}
	M(\o)=\int_\o h(x)e^w\,dx.
\end{equation}
By using the fact that $\eta^*$ is the inverse of $\mu$ and by \eqref{co-area} it is possible to check that
$$
	F(s)=\int_0^s e^{\eta^*(\l)}\,d\l
$$
and hence, for a.e. $s\geq0$,
\begin{equation} \label{F}
	F'(s)=e^{\eta^*(s)}, \quad F''(s)=\frac{d\eta^*(s)}{ds}\,e^{\eta^*(s)}=\frac{d\eta^*(s)}{ds}F'(s)\,.
\end{equation}
Now, we first use the Cauchy-Schwartz inequality, then \eqref{co-area2} and finally \eqref{eta} to get for a.e. $s\geq0$,
\begin{align*}
	\left( \int_{\Gamma(\eta^*(s))} (h(x)e^g)^{\frac12}\,d\s \right)^2 & \leq \left( \int_{\Gamma(\eta^*(s))} \frac{h(x)e^g}{|\n\eta|}\,d\s \right) \left(  \int_{\Gamma(\eta^*(s))} |\n\eta|\,d\s\right) \\
	& = \left(-\frac{d\eta^*(s)}{ds}\right)^{-1} \left(  \int_{\Gamma(\eta^*(s))} |\n\eta|\,d\s \right) \\
	& = \left(-\frac{d\eta^*(s)}{ds}\right)^{-1} \left(  \int_{\O(\eta^*(s))} h(x)e^\eta e^g\,dx \right) \\
	& = \left(-\frac{d\eta^*(s)}{ds}\right)^{-1} F(s).
\end{align*} 
On the other hand, the Huber inequality \cite{hu} asserts that for a.e. $s\geq0$,
$$
	\left( \int_{\Gamma(\eta^*(s))} (h(x)e^g)^{\frac12}\,d\s \right)^2 \geq 4\pi(1-\a(\o))\mu(\eta^*(s))=4\pi(1-\a(\o))s.
$$
It follows that for a.e. $s\geq0$,
\begin{equation} \label{est-F}
	4\pi(1-\a(\o))s \leq \left(-\frac{d\eta^*(s)}{ds}\right)^{-1} F(s).
\end{equation}
Letting,
$$
	P(s)=4\pi(1-\a(\o))\left(s F'(s)-F(s)\right)+\frac12 F^2(s),
$$
we deduce from \eqref{F} and \eqref{est-F} that,
\begin{equation} \label{P}
	\frac{d}{ds}P(s)\geq0 \quad \mbox{for a.e. } s\geq0. 
\end{equation}
Since the functions involved in the definition of $P$ are locally Lipschitz continuous, after integration we end up with,
$$
	P(\mu(0))-P(0)\geq 0,
$$	
which is equivalent to,
$$
	8\pi(1-\a(\o))(\mu(0)-M(\o))+M(\o)^2 \geq0,
$$
where $M(\o)$ is defined in \eqref{M}. Using Huber's inequality once more we deduce that,
\begin{align*}
	\left( \int_{\p\omega}\left(h(x)e^{w}\right)^{\frac 12}\,d\s \right)^2 & = \left( \int_{\p\omega}\left(h(x)e^{g}\right)^{\frac 12}\,d\s \right)^2 \\
		& \geq 4\pi(1-\a(\o))\mu(0) \\
		& \geq \frac12 M(\o)\bigr(8\pi(1-\a(\o))-M(\o)\bigr),
\end{align*}
which is the desired inequality in \eqref{bol-ineq}. The characterization	of the equality case can be carried out as in \cite{bc} and we skip the details.
\end{proof}

\medskip

Next we consider the eigenvalue problem for a linearized Liouville-type equation, by recalling that a nodal domain for $\phi\in C^0(\ov\O)$ 
is any maximal connected component of the subdomain where $\phi$ has a definite sign. 
We will need a Gauss-Green formula and a Courant nodal line Theorem suitable to be applied in our weak setting.
Even under our weak summability assumptions about $h$, still these results are well known, see for example \cite{kok}. 
Therefore we omit the proof of the following Lemma which, in view of our assumptions on $h$, can be obtained by a rather standard adaptation 
of the one worked out in \cite{bl}.
\begin{lem} \label{lem-nodal}
Let $\O\subset\R^2$ be an open, bounded, simply-connected and piecewise $C^2$ domain according to Definition \ref{def-dom}. Let
$w-c\in H^1_0(\O)$ for some $c\in\R$ and $w$ be a weak solution of, 
$$
	\D w + h(x)e^{w}= 0  \mbox{ in } \O,
$$
where $h=e^H$ is such that $\a=\a(\O,h)$ (as defined in \eqref{a}) satisfies $\a(\O,h)<1$.

\medskip

Suppose that for some $\hat\nu$, either $\phi\in H_0^1(\O)$ is a weak solution of, 
\begin{equation} \label{lin1}
\left\{ \begin{array}{ll}
-\D\phi -h(x)e^w\phi=\hat\nu h(x)e^w\phi & \mbox{in } \O, \vspace{0.2cm}\\
 \phi=0 & \mbox{on } \p \O,
\end{array}
\right.
\end{equation}
or that exists $c_0\in\R$, $c_0<0$ such that $\phi-c_0\in H_0^1(\O)$ and $\phi$ is a weak solution of
\begin{equation} \label{lin2}
\left\{ \begin{array}{ll}
-\D\phi -h(x)e^w\phi=\hat\nu h(x)e^w\phi & \mbox{in } \O, \vspace{0.2cm}\\
 \phi=c_0 & \mbox{on } \p \O, \vspace{0.2cm}\\
\int_\O h(x)e^w\phi\,dx=0.
\end{array}
\right.
\end{equation}
Then, for a nodal domain $\o\subseteq\O$ for $\phi$, it holds,
{
\begin{equation} \label{gg1}
	-\int_\o \phi\D\phi\,dx=\int_\o|\n\phi|^2\,dx,
\end{equation}
and
\begin{equation} \label{gg2}
	\int_\o|\n\phi|^2\,dx=(\hat\nu+1)\int_\o h(x)e^w|\phi|^2\,dx.
\end{equation} 
}
Moreover, let
$$
	\bigr(\hat\nu_k, \phi_k^{(j)}\bigr) \quad k\in\N, \quad j=1,\dots,l_k, \, l_k\in\N, \quad \hat\nu_1<\hat\nu_2<\dots,
$$
be the eigenvalues and the corresponding eigenfunctions for \eqref{lin1}. 
Then, $\hat\nu_1$ is simple ($l_1=1$), $\phi_1$ has only one nodal domain { and the second eigenfunction has exactly two nodal domains.
Finally, any other eigenfunction has at least two nodal domains.}
\end{lem}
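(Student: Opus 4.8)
The plan is to reduce the statement to the classical Courant theory, the only genuinely non-standard points being the low regularity of $h$ and of $\p\O$; accordingly the hard part will be Step 1 below. First I would record the relevant integrability: by Remark \ref{remxz}, $he^w\in L^p(\O)$ for some $p>1$, and since any function differing from a constant by an element of $H^1_0(\O)$ lies in every $L^r(\O)$, $r<\infty$, by the two-dimensional Sobolev embedding, it follows that $he^w\phi\in L^q(\O)$ for some $q>1$; hence $\phi\in W^{2,q}(\O)\cap C^0(\ov\O)$ (so that ``nodal domain of $\phi$'' makes sense) and $he^w\phi^2\in L^1(\O)$. The same estimate shows that $H^1_0(\O)\hookrightarrow L^2(\O,he^w\,dx)$ is compact, which is what is needed to run the spectral theory for \eqref{lin1}.

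\emph{Step 1: the identities \eqref{gg1}--\eqref{gg2}.} The crux is to prove that, for a nodal domain $\o\subseteq\O$ of $\phi$ --- say a connected component of $\{\phi>0\}$ --- the truncation $\phi\,\mathbf{1}_\o$ lies in $H^1_0(\O)$ with distributional gradient $(\n\phi)\mathbf{1}_\o$. Since $\phi^{+}\in H^1_0(\O)$ (in case \eqref{lin2} because $\phi=c_0<0$ on $\p\O$), this follows from the Stampacchia truncation lemma together with the capacitary description of $H^1_0$, using that $\phi$ is continuous, that $\p\o\cap\O\subset\{\phi=0\}$, and that $\phi$ vanishes on $\p\O$ in case \eqref{lin1}; here the boundary regularity of Definition \ref{def-dom} is exploited exactly as in \cite{bl}. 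Inserting $\psi=\phi\,\mathbf{1}_\o$ into the weak form of $\D\phi=-(\hat\nu+1)he^w\phi$ then yields, in one stroke,
$$
-\int_\o\phi\,\D\phi\,dx=\int_\o|\n\phi|^2\,dx=(\hat\nu+1)\int_\o he^w|\phi|^2\,dx,
$$
which is \eqref{gg1}--\eqref{gg2}; the right-hand sides are finite by the previous paragraph. The lone exception is the unique negative nodal domain $\o_*$ of \eqref{lin2} whose closure meets $\p\O$ (uniqueness since the boundary collar is connected), for which $\phi\,\mathbf{1}_{\o_*}$ is not admissible; here \eqref{gg1}--\eqref{gg2} instead follow by testing the global equation against $\phi-c_0\in H^1_0(\O)$ --- the cross term dropping by the constraint $\int_\O he^w\phi\,dx=0$ --- and subtracting the (already established) contributions of the interior nodal domains.

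\emph{Step 2: simplicity of $\hat\nu_1$ and its single nodal domain.} By the compact embedding, $\hat\nu_1+1$ is the minimum over $H^1_0(\O)\setminus\{0\}$ of the Rayleigh quotient $R(v)=\int_\O|\n v|^2\big/\int_\O he^w v^2$, and it is positive. Since $R(|\phi_1|)=R(\phi_1)$ I may take $\phi_1\geq0$; then $-\D\phi_1=(\hat\nu_1+1)he^w\phi_1\geq0$, and the strong maximum principle for this weak inequality --- valid in the present summability class, the possible singular point $x_0$ being removable because $\phi_1\in C^0$ --- forces $\phi_1>0$ in $\O$, so $\phi_1$ has exactly one nodal domain. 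If the $\hat\nu_1$-eigenspace were at least two-dimensional, it would contain a nonzero element orthogonal to $\phi_1$ in $L^2(\O,he^w\,dx)$; being itself a minimizer of $R$, that element would have a sign, contradicting orthogonality to the positive $\phi_1$. Hence $l_1=1$.

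\emph{Step 3: the nodal count.} Any eigenfunction with eigenvalue $\hat\nu_k$, $k\geq2$, is $L^2(\O,he^w\,dx)$-orthogonal to $\phi_1>0$, hence changes sign and has at least two nodal domains. For the second eigenfunction $\psi$ --- note that, by Step 2, $\hat\nu_2$ is the second eigenvalue counted with multiplicity --- suppose it had three nodal domains $\o_1,\o_2,\o_3$. Choosing $(a,b)\neq(0,0)$ so that $\Psi:=a\,\psi\mathbf{1}_{\o_1}+b\,\psi\mathbf{1}_{\o_2}\in H^1_0(\O)$ is orthogonal to $\phi_1$ in $L^2(\O,he^w\,dx)$, and using Step 1 on $\o_1$ and $\o_2$, one computes $R(\Psi)=\hat\nu_2+1$. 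Then $\Psi$ attains the min-max value defining $\hat\nu_2$ over $\{v:v\perp\phi_1\}$, hence is a $\hat\nu_2$-eigenfunction, yet it vanishes on the nonempty open set $\o_3$ --- contradicting unique continuation for $-\D v=(\hat\nu_2+1)he^w v$, which holds in this summability class. Therefore $\psi$ has exactly two nodal domains. The delicate step throughout is Step 1 --- establishing that an eigenfunction restricted to one of its nodal domains belongs to $H^1_0$ of that domain, i.e.\ that the nodal set, and in case \eqref{lin2} the part of $\p\o$ lying on the non-smooth $\p\O$, is $H^1_0$-negligible --- after which Steps 2--3 are a routine transcription of Courant's theory as carried out in \cite{bl, kok}.
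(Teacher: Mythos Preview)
The paper omits the proof entirely, saying only that it ``can be obtained by a rather standard adaptation of the one worked out in \cite{bl}'' with the Courant theory as in \cite{kok}; your proposal supplies precisely such an adaptation, correctly isolating Step~1 (admissibility of the nodal truncations in $H^1_0$, including the boundary-touching negative nodal domain in case \eqref{lin2}) as the only non-routine point and then running the standard Rayleigh-quotient and Courant arguments in Steps~2--3. Your proof is correct and is essentially what the paper asserts exists.
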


\medskip

With this at hand we can start the eigenvalues analysis of $(-\D-h(x)e^w)(\cdot)$. 
In particular, by assuming some bounds on $\int_\O h(x)e^w\,dx$ we will derive useful information on the first and second eigenvalue. 
\begin{pro} \label{pro-nodal}
Let $\O$, $h$ and $w$ be as in Lemma \ref{lem-nodal}. Let $\hat\nu_1, \hat\nu_2$ be the first and second eigenvalues for \eqref{lin1}, respectively. Then, it holds:

\smallskip

$(i)$ If $\int_\O h(x)e^w\,dx< 4\pi(1-\a)$, then $\hat\nu_1>0$.

\smallskip

$(ii)$ If $\int_\O h(x)e^w\,dx\leq 8\pi(1-\a)$, then $\hat\nu_2>0$.
\end{pro}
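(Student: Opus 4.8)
The plan is to exploit the rearrangement machinery behind Proposition \ref{bol} at the level of a nodal domain of the relevant eigenfunction, exactly as in the proof of the Alexandrov--Bol inequality itself, but keeping track of the eigenvalue parameter $\hat\nu$. Suppose $\hat\nu$ is an eigenvalue for \eqref{lin1} with $\hat\nu\le 0$ and let $\phi$ be a corresponding eigenfunction. Let $\o\subseteq\O$ be a nodal domain for $\phi$. On $\o$ one has $-\D\phi=(\hat\nu+1)h(x)e^w\phi$ with $\phi$ of one sign and $\phi=0$ on $\p\o$, so by Lemma \ref{lem-nodal} (formula \eqref{gg2}) we get the energy identity $\int_\o|\n\phi|^2\,dx=(\hat\nu+1)\int_\o h(x)e^w|\phi|^2\,dx$. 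Now I would run the co-area/rearrangement argument: set $\o(t)=\{x\in\o:|\phi(x)|>t\}$, $\mu(t)=\int_{\o(t)}h(x)e^w\,dx$, and use the co-area formula together with the Cauchy--Schwarz inequality on $\Gamma(t)=\p\o(t)$ in the form
$$
\left(\int_{\Gamma(t)}(h(x)e^w)^{\frac12}\,d\s\right)^2\le\left(\int_{\Gamma(t)}\frac{h(x)e^w}{|\n\phi|}\,d\s\right)\left(\int_{\Gamma(t)}|\n\phi|\,d\s\right),
$$
where the second factor is $-\frac{d}{dt}\int_{\o(t)}|\n\phi|^2$ (co-area again) and, using the equation, equals $(\hat\nu+1)\bigl(-\frac{d}{dt}\int_{\o(t)}h(x)e^w|\phi|^2\bigr)$. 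Combining with the Alexandrov--Bol/Huber lower bound for the boundary term — note each $\o(t)$ lies in $\O_0=\O$ so $\a(\o(t))\le\a$ and $8\pi(1-\a(\o(t)))\ge 8\pi(1-\a)$ — I expect to arrive at a differential inequality for $\mu(t)$ that, after integration over $t\in[0,\max|\phi|]$, yields a lower bound of the type
$$
(\hat\nu+1)\int_\o h(x)e^w|\phi|^2\,dx\;\ge\;C(\a,\mu(0))\int_\o h(x)e^w|\phi|^2\,dx,
$$
forcing $\hat\nu+1\ge C$, with $C$ strictly bigger than what $\mu(0)\le\int_\O h(x)e^w\,dx$ allows under the stated hypotheses. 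More concretely I expect the sharp outcome to be governed by the quantity $\int_\o h(x)e^w\,dx\le\int_\O h(x)e^w\,dx$, with the bound $4\pi(1-\a)$ (resp.\ $8\pi(1-\a)$) producing the threshold value of $\hat\nu$.

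For part $(i)$: since $\hat\nu_1$ is simple with a one-signed eigenfunction $\phi_1$ (Lemma \ref{lem-nodal}), the only nodal domain is $\o=\O$ itself, so $\mu(0)=\int_\O h(x)e^w\,dx<4\pi(1-\a)$. Feeding this into the rearrangement estimate I expect the resulting differential inequality to be strict and to give $\hat\nu_1+1>1$, i.e.\ $\hat\nu_1>0$; equivalently, one runs the Alexandrov--Bol argument with the extra factor $(\hat\nu_1+1)$ and checks that $\hat\nu_1\le 0$ would contradict $\mu(0)<4\pi(1-\a)$ — essentially the computation that $P(s)=4\pi(1-\a)(sF'-F)+\tfrac{\hat\nu_1+1}{2}F^2$ has $P'\ge0$ forces $8\pi(1-\a)(\mu(0)-M)+(\hat\nu_1+1)M^2\ge0$ with $M=\int_\o h(x)e^w|\phi_1|^2\,dx$-type quantity, and then $\mu(0)<4\pi(1-\a)$ rules out $\hat\nu_1\le0$.

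For part $(ii)$: by Lemma \ref{lem-nodal} the second eigenfunction $\phi_2$ has \emph{exactly two} nodal domains $\o_1,\o_2$ with $\o_1\cup\o_2\cup(\p\o_1\cap\p\o_2)$ essentially exhausting $\O$, hence $\mu^{(1)}(0)+\mu^{(2)}(0)=\int_\O h(x)e^w\,dx\le 8\pi(1-\a)$, so $\min_i\mu^{(i)}(0)\le 4\pi(1-\a)$. Applying the estimate from part $(i)$ on that nodal domain — where now the threshold $4\pi(1-\a)$ is met with possible equality — gives $\hat\nu_2\ge0$; to upgrade to strict positivity I would argue by contradiction: if $\hat\nu_2=0$ then equality must hold throughout, in particular in the Alexandrov--Bol inequality on $\o_i$ and in all the Cauchy--Schwarz steps, forcing (by the equality characterization in Proposition \ref{bol}) $\o_i$ to be a ball with $h e^w=|x|^{-2\a}e^{U_{\l,\a}}$, $\mu^{(i)}(0)=4\pi(1-\a)$, hence $\mu^{(1)}(0)=\mu^{(2)}(0)=4\pi(1-\a)$ and $\o_1,\o_2$ both conformal images of balls partitioning $\O$; one then derives a contradiction with $\O$ being simply connected and the two nodal domains sharing a common boundary arc (two disjoint ``round'' pieces cannot tile a simply-connected domain in this way, or the shared level curve $\{\phi_2=0\}$ cannot simultaneously be the full circle boundary of two such balls). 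The main obstacle is precisely this borderline analysis in $(ii)$: making the equality discussion rigorous in the weak setting — justifying that $\hat\nu_2=0$ propagates equality all the way through the co-area/Cauchy--Schwarz chain on a nodal domain and then extracting the geometric contradiction from the characterization in Proposition \ref{bol} — is the delicate part, whereas the differential-inequality bookkeeping is a routine (if careful) adaptation of the proof of Proposition \ref{bol}.
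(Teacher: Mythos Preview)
Your proposal shares the correct high–level ingredients (co-area, Cauchy--Schwarz on level sets of $\phi$, Alexandrov--Bol on the superlevel sets $\o(t)$), but there are two concrete gaps that prevent it from closing.

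\medskip
\textbf{Part $(i)$: the missing comparison model.} Your attempt to mimic the proof of Proposition~\ref{bol} at the level of $\phi$ does not work as written. First, the identity you claim, $\int_{\Gamma(t)}|\n\phi|\,d\s=(\hat\nu+1)\bigl(-\frac{d}{dt}\int_{\o(t)}he^w|\phi|^2\bigr)$, is false: integrating the equation over $\o(t)$ gives $\int_{\Gamma(t)}|\n\phi|\,d\s=(\hat\nu+1)\int_{\o(t)}he^w\phi\,dx$, which is a different quantity. Second, and more seriously, the function $P(s)=4\pi(1-\a)(sF'-F)+\tfrac{\hat\nu+1}{2}F^2$ you propose has no meaning here: in Proposition~\ref{bol} the pair $(F,P)$ was built from the nonlinear equation $\D\eta+he^ge^\eta=0$, and its monotonicity hinged on $F'(s)=e^{\eta^*(s)}$, which has no analogue for the linear eigenfunction $\phi$. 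You therefore never actually produce the constant $C(\a,\mu(0))$ you need. The paper does not try to extract a direct differential inequality: instead it \emph{symmetrizes} $\phi$ onto a ball $B_{R_0}$ equipped with the explicit model density $|x|^{-2\a}e^{U_\a}$, proves $\int_{B_{R_0}}|\n\phi^*|^2\le\int_\O|\n\phi|^2$ via exactly the Cauchy--Schwarz/co-area/Bol chain you describe, and then compares with the \emph{explicit} first eigenfunction $\psi(x)=\frac{8-|x|^{2(1-\a)}}{8+|x|^{2(1-\a)}}$ of the model problem. That explicit model is what turns the mass bound $\mu(0)<4\pi(1-\a)$ into a sharp size constraint $R_0<8^{1/(2(1-\a))}$, contradicting the eigenvalue inequality $R_0\ge 8^{1/(2(1-\a))}$ forced by $\hat\nu_1\le 0$. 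Without the comparison with $(U_\a,\psi)$ your argument has no mechanism to detect the threshold $4\pi(1-\a)$.

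\medskip
\textbf{Part $(ii)$: the equality case.} Your reduction ``one nodal domain has mass $\le 4\pi(1-\a)$, apply $(i)$'' matches the paper, but your treatment of the borderline $\mu^{(1)}(0)=\mu^{(2)}(0)=4\pi(1-\a)$ is a hand-wave: the claim that ``two disjoint round pieces cannot tile a simply-connected domain'' is not a proof and in fact is not how the contradiction is obtained. The paper argues as follows. Equality throughout forces in particular equality in the Cauchy--Schwarz step, i.e.\ $h(x)e^{w(x)}=c_t|\n\phi(x)|^2$ on $\{\phi=t\}$ for a.e.\ $t$. Since both $\O_+$ and $\O_-$ are simply connected (from the Alexandrov--Bol equality characterization), the nodal line $\{\phi=0\}$ must meet $\p\O$, and w.l.o.g.\ $\p\O_+\cap\p\O$ contains a $C^2$ arc. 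Letting $t\to 0$ and using that $w=0$ and $h$ is bounded above and below near $\p\O$, one gets that $|\n\phi|/h$ is constant on that arc. But at the point $x_0$ where the nodal line meets $\p\O$ one has $|\n\phi(x_0)|=0$ (either directly if $x_0$ is a smooth boundary point, or after the conformal straightening of Definition~\ref{def-dom} if it is a corner), hence $|\n\phi|\equiv 0$ on the whole $C^2$ arc of $\p\O_+\cap\p\O$, contradicting the Hopf boundary lemma. This boundary/Hopf step, which crucially uses the regularity hypotheses on $\p\O$, is the actual obstruction; your ``tiling'' intuition does not capture it.
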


\begin{proof}
First of all, by Remark \ref{remxz} we have 
$w\in W^{2,s,\mbox{\rm \scriptsize loc}}(\O\setminus\{x_0\})\cap W^{2,q}(\O)\cap C^0(\ov\O)$ for some $s>2$, $q>1$ and 
some $x_0\in\O$. Similar regularity properties are deduced on the eigenfunctions $\phi$ of \eqref{lin1}. 
Clearly, an eigenvalue and eigenfunction $(\hat\nu,\phi)$ for \eqref{lin1} correspond to an eigenvalue and eigenfunction $(\nu=\hat\nu+1,\phi)$ for,
\begin{equation} \label{lin3}
\left\{ \begin{array}{ll}
-\D\phi =\nu h(x)e^w\phi & \mbox{in } \O, \vspace{0.2cm}\\
 \phi=0 & \mbox{on } \p \O.
\end{array}
\right.
\end{equation}

\medskip

\textbf{Proof of $(i)$.} 
Suppose $\int_\O h(x)e^w\,dx< 4\pi(1-\a)$ and suppose by contradiction that there exists $\phi=\phi_1$, 
the first eigenfunction for \eqref{lin1}, with $\hat\nu_1\leq 0$. Then, we have
$$
\left\{ \begin{array}{ll}
-\D\phi =\nu_1 h(x)e^w\phi & \mbox{in } \O, \vspace{0.2cm}\\
 \phi=0 & \mbox{on } \p \O,
\end{array}
\right.
$$
with $\nu_1\leq 1$. Moreover, by Lemma \ref{lem-nodal} we know that $\phi$ has only one nodal domain and w.l.o.g. we assume $\phi\geq0$ in $\O$. In particular, by the maximum principle for weak solutions we have $\phi>0$ in $\O$. Recalling \eqref{U} we set $U_\a(x):=U_{1,\a}(x)$, i.e.
\begin{equation} \label{Ua}
	U_{\a}(x) = \ln\left( \dfrac{ (1-\a) }{ 1+\frac{1}{8}|x|^{2(1-\a)}} \right)^2,
\end{equation}
which satisfies,
$$
	\D U_{\a} + |x|^{-2\a}e^{U_{\a}}=0 \mbox{ in }\R^2\setminus \{0\}.
$$
Next, let $t_+=\max_{\ov\O}\phi$ and for $t>0$ we define $\O_t=\{x\in\O \,:\,\phi>t\}$ and $R(t)>0$ such that
$$
	\int_{B_R(t)}|x|^{-2\a}e^{U_{\a}}\,dx=\int_{\O_t} h(x)e^w\,dx.
$$
Since $\phi>0$ in $\O$ we put $\O_0=\O$. Moreover, $R_0=\lim_{t\to0^+}R(t)$ and $\lim_{t\to (t_+)^-}R(t)=0$. Then $\phi^*:B_{R_0}\to\R$, which for $y\in B_{R_0}$, $|y|=r$, is given by,
$$
	\phi^*(r)=\sup\{t\in(0,t_+)\,:\,R(t)>r\},
$$ 
is a radial, decreasing, equimeasurable rearrangement of $\phi$ with respect to the measures $h(x)e^w\,dx$ and $|x|^{-2\a}e^{U_{\a}}$, and hence, in particular,
\begin{align}
	&B_{R(t)}=\{x\in\R^2\,:\,\phi^*(x)>t\}, \nonumber\\
	&\int_{\{\phi^*>t\}}|x|^{-2\a}e^{U_{\a}}\,dx=\int_{\O_t} h(x)e^w\,dx \quad t\in[0,t_+), \nonumber\\
	&\int_{B_{R_0}}|x|^{-2\a}e^{U_{\a}}|\phi^*|^2\,dx=\int_{\O} h(x)e^w|\phi|^2\,dx. \label{rearr}
\end{align}
Clearly, $\phi^*$ is a BV function. We apply the Cauchy-Schwartz inequality and the co-area formula to get that, 
\begin{align}
	\int_{\{\phi=t\}} |\n \phi|\,d\s & \geq \left( \int_{\{\phi=t\}} \left(h(x)e^{w}\right)^{\frac 12}\,d\s \right)^2 \left( \int_{\{\phi=t\}} \dfrac{h(x)e^u}{|\n \phi|}\,d\s \right)^{-1} \label{c-s}\\
	  & = \left( \int_{\{\phi=t\}} \left(h(x)e^{w}\right)^{\frac 12}\,d\s \right)^2 \left( -\dfrac{d}{dt}\int_{\O_t} h(x)e^w\,dx \right)^{-1}, \nonumber
\end{align}
for a.e. $t$. Then, by means of the Alexandrov-Bol inequality in Proposition \ref{bol} we have,
\begin{align*}
	&\left( \int_{\{\phi=t\}} \left(h(x)e^{w}\right)^{\frac 12}\,d\s \right)^2 \left( -\dfrac{d}{dt}\int_{\O_t} h(x)e^w\,dx \right)^{-1} \\
	&\geq \frac 12 \left( \int_{\O_t} h(x)e^w\,dx \right)\left( 8\pi(1-\a)-\int_{\O_t} h(x)e^w\,dx \right) \left( -\dfrac{d}{dt}\int_{\O_t} h(x)e^w\,dx \right)^{-1}.
\end{align*}
Since $\phi^*$ is an equimeasurable rearrangement of $\phi$ with respect to the measures $h(x)e^u \,dx$, $|x|^{-2\a}e^{U_{\a}}\,dx$, and since 
$|x|^{-2\a}e^{U_{\a}}$ realizes the equality in Proposition~\ref{bol}, we get,
\begin{align*}
&\frac 12 \left( \int_{\O_t} h(x)e^w\,dx \right)\left( 8\pi(1-\a)-\int_{\O_t} h(x)e^w\,dx \right) \left( -\dfrac{d}{dt}\int_{\O_t} h(x)e^w\,dx \right)^{-1} \\
&=\frac 12 \left( \int_{\{\phi^*>t\}} |x|^{-2\a}e^{U_{\a}}\,dx \right)\left( 8\pi(1-\a)-\int_{\{\phi^*>t\}} |x|^{-2\a}e^{U_{\a}}\,dx \right) \left( -\dfrac{d}{dt}\int_{\{\phi^*>t\}} |x|^{-2\a}e^{U_{\a}}\,dx \right)^{-1} \\
& =\left( \int_{\{\phi^*=t\}} \left(|x|^{-2\a}e^{U_{\a}}\right)^{\frac 12}\,d\s \right)^2 \left( -\dfrac{d}{dt}\int_{\{\phi^*>t\}} |x|^{-2\a}e^{U_{\a}}\,dx \right)^{-1} \\
& = \int_{\{\phi^*=t\}} |\n \phi^*|\,d\s,
\end{align*}
where in the last equality we used the co-area formula for BV functions, see \cite{fr}. Therefore, we have proved that,
$$
\int_{\{\phi^*=t\}} |\n \phi^*|\,d\s \leq\int_{\{\phi=t\}} |\n \phi|\,d\s,
$$
for a.e. $t$, which in turn yields
\begin{equation} \label{grad}
	\int_{B_{R_0}} |\n \phi^*|^2\,dx \leq \int_\O |\n \phi|^2\,dx.
\end{equation}

With the latter estimate at hand we can use \eqref{rearr} and \eqref{gg2} (or the variational characterization of $\phi$) to deduce that,
$$
	\int_{B_{R_0}} |\n \phi^*|^2\,dx-\int_{B_{R_0}}|x|^{-2\a}e^{U_{\a}}|\phi^*|^2\,dx \leq \int_\O |\n \phi|^2\,dx-\int_{\O} h(x)e^w|\phi|^2\,dx 
$$
$$	
 =(\nu_1-1)\int_{\O} h(x)e^w|\phi|^2\,dx\leq0,
$$
since $\nu_1\leq1$ by assumption. Moreover, $\phi^*(R_0)=0$. Therefore, we conclude that the first eigenvalue for $(-\D-h(x)e^w)(\cdot)$ on $B_{R_0}$ with Dirichlet boundary conditions is non-positive. Consider now $\psi(x)=\frac{8-|x|^{2(1-\a)}}{8+|x|^{2(1-\a)}}$ which satisfies
$$
	-\D\psi-|x|^{-2\a}e^{U_{\a}}\psi=0 \mbox{ in } \R^2\setminus\{0\}, \quad \psi\in H^1_0(B_{R}(0)), \, R=8^{\frac{1}{2(1-\a)}}.
$$
Since the first eigenvalue is non-positive one can deduce that $R_0\geq8^{\frac{1}{2(1-\a)}}$. Moreover, 
\begin{equation} \label{R0}
	8\pi(1-\a)\frac{R_0^{2(1-\a)}}{8+R_0^{2(1-\a)}}=\int_{B_{R_0}}|x|^{-2\a}e^{U_{\a}}\,dx=\int_\O h(x)e^w\,dx<4\pi(1-\a),
\end{equation}
by assumption and hence $R_0<8^{\frac{1}{2(1-\a)}}$, yielding a contradiction. 

\

\textbf{Proof of $(ii)$.} Suppose now $\int_\O h(x)e^w\,dx\leq 8\pi(1-\a)$ and suppose by contradiction that there exists $\phi=\phi_2$, 
a second eigenfunction for \eqref{lin3} corresponding to a second eigenvalue $\nu_2$ with $\nu_2=\hat\nu_2+1\leq1$. 
{ From Lemma \ref{lem-nodal} we know that $\phi$ has exactly two nodal domains:}
\begin{equation} \label{pm}
	\O_+=\{x\in\O \,:\,\phi(x)>0\}, \quad \O_-=\{x\in\O \,:\,\phi(x)<0\}.
\end{equation}
Suppose first that,
$$
	\int_{\O_+} h(x)e^w\,dx<4\pi(1-\a).
$$
In this case we exploit the rearrangement argument introduced in the proof of $(i)$ and just replace $\O$ with $\O_+$. { By using also \eqref{gg2} we end up with}
\begin{equation} \label{rearr-ineq}
	\int_{B_{R_0}} |\n \phi^*|^2\,dx-\int_{B_{R_0}}|x|^{-2\a}e^{U_{\a}}|\phi^*|^2\,dx \leq \int_{\O_+} |\n \phi|^2\,dx-\int_{\O_+} h(x)e^w|\phi|^2\,dx 
\end{equation}
$$	
 =(\nu_2-1)\int_{\O_+} h(x)e^w|\phi|^2\,dx\leq0,
$$
and then the same argument used in the proof of $(i)$ yields to a contradiction.

\medskip

If instead $\int_{\O_+} h(x)e^w\,dx>4\pi(1-\a)$ we may switch the role of $\O_+$ and $\O_-$ and apply again the above argument. Therefore, we are left with the case 
$$
	\int_{\O_+} h(x)e^w\,dx=\int_{\O_-} h(x)e^w\,dx=4\pi(1-\a), \quad \int_{\O} h(x)e^w\,dx=8\pi(1-\a).
$$
In this case we necessarily have $R_0=8^{\frac{1}{2(1-\a)}}$ and the first eigenvalue for $(-\D-|x|^{-2\a}e^{U_{\a}})(\cdot)$ on $B_{R_0}$ is zero. Therefore, 
$$
	\int_{B_{R_0}} |\n \phi^*|^2\,dx-\int_{B_{R_0}}|x|^{-2\a}e^{U_{\a}}|\phi^*|^2\,dx=0,
$$
and hence, in particular, the inequality in \eqref{rearr-ineq} turns out to be an equality. 
This yields to the equality in \eqref{grad} as well. On one side, the latter equality holds if and only if we have equality in the 
Alexandrov-Bol inequality \eqref{bol-ineq} and hence, in particular, $\O_+$ is simply-connected and 
$h(x)e^{w}$ is such that $\mu_+=-\D H=4\pi\a\d_{p}$ in $\O_+$. The same holds for $\O_-$. 
At this point, if we were in presence of positive singular sources, then the latter facts would force the positive singular sources to 
be supported on the nodal line of $\phi$ in $\O$ thus contradicting a result of \cite{bers}  (see \cite{bl} for details) and the conclusion would follow.  
The general case is more delicate. 

\medskip

The equality in the Cauchy-Schwartz inequality \eqref{c-s} implies, for a.e. $t$,
\begin{equation} \label{c-s2}
	h(x)e^{w(x)}=c_t|\n\phi(x)|^2,
\end{equation}
for some constant $c_t$ depending on $t$, for all $x$ such that $\phi(x)=t$. 
Since both $\O_+$ and $\O_-$ are simply-connected, the nodal line, which is the closure of $\{\phi(x)=0\,:\,x\in\O\}$, 
must intersect $\p\O$ and moreover, we may assume without loss of generality that $\p\O_+\cap\p\O$ contains an arc of positive length. 
Now, since $w=0$ on $\p\O$ and since $h$ is uniformly bounded from above and from below in a sufficiently small neighborhood of $\p\O$, 
see Remark \ref{remxz}, by letting $t\to0$ we deduce from \eqref{c-s2} that $\frac{|\n\phi|}{h}$ is constant on any $C^2$ portion of $\p\O_+\cap\p\O$.\\ 
At this point, let $x_0$ be a point of the intersection of the nodal line with $\p\O$. If $x_0$ is a smooth point of the boundary we readily have $\phi\in C^{1,\b}$, $\b\in(0,1)$ at $x_0$ and necessarily $|\n\phi(x_0)|=0$. Since $\frac{|\n\phi|}{h}$ is constant, it follows that $|\n\phi|=0$ on the $C^2$ portion of $\p\O_+\cap\p\O$ containing $x_0$. This is in contradiction to the Hopf boundary point lemma.  If $x_0$ is not a smooth point of the boundary we can proceed as in Case 2 of Lemma 4.3 in \cite{CCL} and exploit the properties on $\p\O$ as given in Definition~\ref{def-dom} to get an analogous contradiction to the Hopf lemma. 
\end{proof}

\medskip

We consider now the eigenvalue problem with non-null boundary conditions \eqref{lin2} and follow an argument in \cite{CCL}. 
In particular, we generalize the proof in \cite{CCL} to the singular case and extend it to the sub-critical regime $\rho<8\pi(1-\a)$. 
This step is new also for the regular case and simplifies the original argument due to \cite{suz}. We have the following property.

\begin{pro} \label{pro-nodal2}
Let $\O$, $h$ and $w$ be as in Lemma \ref{lem-nodal}. Let $\hat\nu$ be an eigenvalue for \eqref{lin2}. If $\int_{\O}h(x)e^w\,dx\leq8\pi(1-\a)$, then $\hat\nu>0$. 
\end{pro}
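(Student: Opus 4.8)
The plan is to mimic the strategy of Proposition \ref{pro-nodal}, adapting the rearrangement + Alexandrov-Bol machinery to the constrained eigenvalue problem \eqref{lin2}. Suppose by contradiction that $\hat\nu\le 0$, i.e.\ $\nu=\hat\nu+1\le 1$, and let $\phi$ be a corresponding eigenfunction with $\phi-c_0\in H^1_0(\O)$, $c_0<0$, and $\int_\O h(x)e^w\phi\,dx=0$. The orthogonality condition forces $\phi$ to change sign, so it has (at least) two nodal domains; call them $\O_+=\{\phi>0\}$ and $\O_-=\{\phi<0\}$. On $\O_+$ we have $\phi=0$ on $\p\O_+$ (since $\phi>0$ there and $c_0<0$ on $\p\O$, the set $\O_+$ is compactly contained away from $\p\O$, or at least $\phi$ vanishes on $\p\O_+$), and $-\D\phi=\nu h(x)e^w\phi$ with $\nu\le 1$. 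The first observation is that exactly one of $\O_+$, $\O_-$ can carry mass $\ge 4\pi(1-\a)$, because the total mass is $\le 8\pi(1-\a)$; w.l.o.g.\ $\int_{\O_+}h(x)e^w\,dx\le 4\pi(1-\a)$.

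Next I would run the rearrangement argument from the proof of Proposition \ref{pro-nodal}(i) verbatim with $\O$ replaced by the nodal domain $\O_+$: define $\O_t=\{x\in\O_+:\phi>t\}$, the radius $R(t)$ via the equimeasurability condition with the model density $|x|^{-2\a}e^{U_\a}$, the rearrangement $\phi^*$ on $B_{R_0}$, and use Cauchy--Schwartz, the co-area formula and the Alexandrov-Bol inequality (Proposition \ref{bol}) to get $\int_{B_{R_0}}|\n\phi^*|^2\,dx\le\int_{\O_+}|\n\phi|^2\,dx$, together with $\int_{B_{R_0}}|x|^{-2\a}e^{U_\a}|\phi^*|^2\,dx=\int_{\O_+}h(x)e^w|\phi|^2\,dx$. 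Using the analogue of \eqref{gg2} on the nodal domain $\O_+$ (valid by Lemma \ref{lem-nodal}, whose conclusions \eqref{gg1}--\eqref{gg2} apply to nodal domains of solutions of \eqref{lin2} as well), the difference $\int_{B_{R_0}}|\n\phi^*|^2-\int_{B_{R_0}}|x|^{-2\a}e^{U_\a}|\phi^*|^2 = (\nu-1)\int_{\O_+}h(x)e^w|\phi|^2\le 0$. Hence the first Dirichlet eigenvalue of $(-\D-|x|^{-2\a}e^{U_\a})$ on $B_{R_0}$ is non-positive, which by comparison with the explicit eigenfunction $\psi(x)=\frac{8-|x|^{2(1-\a)}}{8+|x|^{2(1-\a)}}$ forces $R_0\ge 8^{\frac1{2(1-\a)}}$, while the mass identity $8\pi(1-\a)\frac{R_0^{2(1-\a)}}{8+R_0^{2(1-\a)}}=\int_{\O_+}h(x)e^w\,dx\le 4\pi(1-\a)$ forces $R_0\le 8^{\frac1{2(1-\a)}}$. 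So $R_0=8^{\frac1{2(1-\a)}}$ and all inequalities are equalities; in particular $\int_{\O_+}h(x)e^w\,dx=4\pi(1-\a)$, and by symmetry $\int_{\O_-}h(x)e^w\,dx=4\pi(1-\a)$ too, so $\phi$ has exactly two nodal domains and $\int_\O h(x)e^w\,dx=8\pi(1-\a)$.

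At this point I am in the equality case, exactly as at the end of the proof of Proposition \ref{pro-nodal}(ii), and I would invoke the same conclusion: equality in \eqref{grad} is equality in the Alexandrov-Bol inequality on $\O_+$ (and on $\O_-$), so each nodal domain is simply-connected and $\mu_+=-\D H=4\pi\a\d_p$ concentrated at a single point inside it; moreover equality in the Cauchy--Schwartz step gives $h(x)e^{w(x)}=c_t|\n\phi(x)|^2$ on $\{\phi=t\}$ for a.e.\ $t$. Since $\O_+,\O_-$ are simply-connected their common nodal line must reach $\p\O$, and since $\phi=c_0\neq 0$ on $\p\O$ the nodal line meets $\p\O$ only where $\phi-c_0$ and... — more carefully, one argues as in \cite{CCL}: along any $C^2$ arc of $\p\O_+\cap\p\O$ one lets $t\to 0$ in $h e^w=c_t|\n\phi|^2$ and, using that $h$ is bounded above and below near $\p\O$ (Remark \ref{remxz}), deduces $|\n\phi|/h$ is constant there; picking a point $x_0$ on the nodal line $\cap\,\p\O$ forces $|\n\phi(x_0)|=0$ (at a smooth point directly, at a non-smooth point via Case 2 of Lemma 4.3 in \cite{CCL} using Definition \ref{def-dom}), hence $|\n\phi|\equiv 0$ on a boundary arc, contradicting the Hopf boundary point lemma. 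This final equality-case/Hopf-lemma step is the main obstacle — it is the only place where the non-null boundary datum $c_0$ and the constraint $\int_\O h e^w\phi=0$ really enter, and one must check that the sign normalization ($c_0<0$, so the "outer" nodal domain touching $\p\O$ is $\O_-$, making the nodal line hit $\p\O$) is consistent with having $\O_+$ be the one meeting the boundary on a set of positive length, possibly switching the roles of $\O_+$ and $\O_-$; but structurally everything reduces to the arguments already carried out in Proposition \ref{pro-nodal}, so the proof is short.
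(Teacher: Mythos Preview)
Your attempt to reduce this to the argument of Proposition \ref{pro-nodal}(ii) breaks in two places. First, the ``w.l.o.g.\ $\int_{\O_+}h e^w\le 4\pi(1-\a)$'' is illegitimate: since $\phi=c_0<0$ on $\p\O$, the set $\O_+=\{\phi>0\}$ is compactly contained in $\O$ while $\phi=c_0\ne 0$ on $\p\O_-\cap\p\O$, so the Dirichlet rearrangement (which needs $\phi^*(R_0)=0$) is available on $\O_+$ but not on $\O_-$; the two nodal domains are not interchangeable. Your own rearrangement on $\O_+$ in fact yields $\int_{\O_+}h e^w\ge 4\pi(1-\a)$, so the case $\int_{\O_+}h e^w>4\pi(1-\a)$ (equivalently $\int_{\O_-}h e^w<4\pi(1-\a)$) is perfectly consistent and is left completely untreated. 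Second, and decisively, the Hopf-lemma endgame cannot run here: because $\phi=c_0<0$ on $\p\O$, the nodal line $\{\phi=0\}=\p\O_+$ lies strictly in the interior of $\O$ and never meets $\p\O$, so there is no boundary point at which to invoke Hopf. Equality in Bol on $\O_+$ only tells you $\O_+$ is simply-connected, which then forces $\O_-$ to surround it and hence to be multiply-connected; the premise ``both nodal domains simply-connected $\Rightarrow$ nodal line reaches $\p\O$'' is therefore never reached. You flagged this tension yourself at the end, but there is no sign-swap that rescues it.

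The paper proceeds quite differently after establishing $\int_{\O_+}h e^w\ge 4\pi(1-\a)$. It passes to the level sets $\wtilde\O_\pm=\{\phi\gtrless c_0\}$ rather than to the nodal domains, and builds a \emph{global} radial rearrangement $\phi_*$ on all of $B_{R_0}$ (with $R_0=+\infty$ when $\int_\O h e^w=8\pi(1-\a)$), gluing a decreasing rearrangement over $\wtilde\O_+$ to an annular one over $\wtilde\O_-$ when the latter is nonempty. The constraint $\int_\O h e^w\phi=0$ survives as $\int_{B_{R_0}}|x|^{-2\a}e^{U_\a}\phi_*=0$, and the contradiction comes from analyzing the constrained radial eigenvalue $K^*$ in \eqref{K} via a Sturm-type comparison of its minimizer with the explicit $\psi=(8-|x|^{2(1-\a)})/(8+|x|^{2(1-\a)})$: one obtains $K^*>1$ in the subcritical case and $K^*=1$ (followed by a separate Bol-equality contradiction) at the critical value. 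No boundary Hopf argument is used in Proposition \ref{pro-nodal2}; that device is specific to Proposition \ref{pro-nodal}(ii), where $\phi=0$ on $\p\O$.
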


\begin{proof}
Suppose by contradiction that there exists an eigenfunction $\phi$ such that,
$$
\left\{ \begin{array}{ll}
-\D\phi =\nu h(x)e^w\phi & \mbox{in } \O, \vspace{0.2cm}\\
 \phi=c_0 & \mbox{on } \p \O, \vspace{0.2cm}\\
\int_\O h(x)e^w\phi\,dx=0,
\end{array}
\right.
$$
for some $c_0<0$ and some $\nu\leq1$. Let $\O_+$ and $\O_-$ be defined as in \eqref{pm}. We start by showing that
\begin{equation} \label{o+}
	\int_{\O_+} h(x)e^w\,dx \geq 4\pi(1-\a).
\end{equation}
First observe that clearly $\O_+\subset\subset\O$. The estimate in \eqref{o+} will follow by showing that 
$$
	\int_{\o_+} h(x)e^w\,dx \geq 4\pi(1-\a),
$$
for any connected component $\o_+$ of $\O_+$. Indeed, suppose this is not the case. Then, if $\o_+$ is simply-connected, $\phi$ is the first eigenfunction for \eqref{lin1} on $\o_+$ and Proposition~\ref{pro-nodal} $(i)$ implies $\hat\nu=\hat\nu_1>0$, a contradiction. If instead $\o_+$ is multiply connected it is not difficult to see that we can apply Proposition \ref{pro-nodal} $(ii)$ to get $\hat\nu=\hat\nu_2>0$, a contradiction again. This completes the proof of \eqref{o+}.

\medskip

Letting now
$$
	\wtilde\O_+=\{x\in\O \,:\,\phi(x)>c_0\}, \quad \wtilde\O_-=\{x\in\O \,:\,\phi(x)< c_0\},
$$
we distinguish two cases.

\medskip

\textbf{Case 1.} Suppose that $\wtilde\O_-=\emptyset$. In this case we perform the same rearrangement argument for $\phi$ on $\O$ introduced in Proposition \ref{pro-nodal} $(i)$. For any $t>c_0$ let $\phi^*$ be the radial, decreasing, equimeasurable rearrangement of $\phi$ with respect to the measures $h(x)e^w\,dx$ and $|x|^{-2\a}e^{U_{\a}}$, where $U_\a$ is given in \eqref{Ua}. In particular,
\begin{equation} \label{tot}
	\int_{B_{R_0}}|x|^{-2\a}e^{U_{\a}}\,dx=\int_{\O} h(x)e^w\,dx,
\end{equation}
where $R_0=+\infty$ if $\int_{\O} h(x)e^w\,dx=8\pi(1-\a)$. As in Proposition \ref{pro-nodal} we have,
\begin{align*}
	&\int_{B_{R_0}}|x|^{-2\a}e^{U_{\a}}|\phi^*|^2\,dx=\int_{\O} h(x)e^w|\phi|^2\,dx, \\
	&\int_{B_{R_0}}|x|^{-2\a}e^{U_{\a}}\phi^*\,dx=\int_{\O} h(x)e^w\phi\,dx=0, \\
	&\int_{B_{R_0}} |\n \phi^*|^2\,dx \leq \int_\O |\n \phi|^2\,dx.
\end{align*}
Moreover,
$$
	\int_{B_{R_0}} |\n \phi^*|^2\,dx-\int_{B_{R_0}}|x|^{-2\a}e^{U_{\a}}|\phi^*|^2\,dx \leq \int_\O |\n \phi|^2\,dx-\int_{\O} h(x)e^w|\phi|^2\,dx 
$$
\begin{equation} \label{rearr-ineq2}
 =(\nu-1)\int_{\O} h(x)e^w|\phi|^2\,dx\leq0.
\end{equation}
We then define,
\begin{align} \label{K}
\begin{split}
	K^*=\inf\left\{ \int_{B_{R_0}} |\n \psi|^2\,dx \right. & : \psi\in H_{\mbox{\rm \scriptsize rad}}(B_{R_0}), \ \int_{B_{R_0}}|x|^{-2\a}e^{U_{\a}}\psi\,dx=0, \\
			& \ \left. \int_{B_{R_0}}|x|^{-2\a}e^{U_{\a}}|\psi|^2\,dx=1 \right\},
\end{split}
\end{align}
where $H_{\mbox{\rm \scriptsize rad}}(B_{R_0})$ stands for radial functions $\psi$ with $\psi\in L^2(B_{R_0},|x|^{-2\a}e^{U_{\a}}\,dx)$, $|\n\psi|\in L^2(B_{R_0})$. We point out once more that $B_{R_0}=\R^2$ in case $\int_{\O} h(x)e^w\,dx=8\pi(1-\a)$. Observe that by construction and by the property \eqref{rearr-ineq2} of $\phi^*$, we obtain $K^*\leq 1$ for any $\rho\leq8\pi(1-\a)$. We distinguish now between two cases.

\medskip

Suppose first $\rho<8\pi(1-\a)$ and $R_0<+\infty$. We will extend here the argument introduced in \cite{CCL}. 
In particular, this simplifies the original argument due to \cite{suz}. 
We start by observing that, as in Proposition \ref{pro-nodal}, in analogy with \eqref{R0}, 
the fact that $\int_\O h(x)e^w\,dx>\int_{\O_+} h(x)e^w\,dx\geq 4\pi(1-\a)$, see \eqref{o+}, implies $R_0>8^{\frac{1}{2(1-\a)}}$. Let now $\psi^*$ be the minimizer of \eqref{K} which satisfies
$$
	\D\psi^* +K^*|x|^{-2\a}e^{U_{\a}}\psi^*=0 \quad \mbox{in } B_{R_0}\setminus\{0\}, 
$$
and
\begin{equation} \label{zero}
	\int_{B_{R_0}}|x|^{-2\a}e^{U_{\a}}\psi^*\,dx=0.
\end{equation}
By the latter property $\psi^*$ changes sign in $B_{R_0}$. On the other hand, we already know that $K^*\leq 1$ and hence $\psi^*$ changes sign 
only once otherwise we may use Proposition \ref{pro-nodal} $(ii)$ to get a contradiction. Therefore, we may assume that there exists $\xi_0\in(0,R_0)$ such that,
$$
\left\{ \begin{array}{ll}
	\psi^*(r)>0 & \mbox{for } r\in[0,\xi_0), \\
	\psi^*(\xi_0)=0,	& \\
  \psi^*(r)<0 & \mbox{for } r\in(\xi_0,R_0].
\end{array}
\right.
$$
By integrating the equation for $\psi^*$ and by using \eqref{zero} we deduce,
$$
	R_0(\psi^*)'(R_0)= -K^*\int_0^{R_0} |s|^{-2\a}e^{U_{\a}(s)}\psi^*(s)s\,ds=0.
$$
 Therefore, so far we can assert that,
\begin{equation} \label{psi*}
\psi^*(R_0)<0, \quad (\psi^*)'(R_0)=0.
\end{equation}
Consider now $\psi(x)=\frac{8-|x|^{2(1-\a)}}{8+|x|^{2(1-\a)}}$ which satisfies
$$
\left\{ \begin{array}{ll}
	\D\psi+|x|^{-2\a}e^{U_{\a}}\psi=0 & \mbox{in } \R^2\setminus\{0\}, \vspace{0.2cm}\\
  \psi(x)=0 & \mbox{for } |x|=8^{\frac{1}{2(1-\a)}}.
\end{array}
\right.
$$
In particular, since $R_0>8^{\frac{1}{2(1-\a)}}$ we have,
\begin{equation} \label{psi}
	\psi(R_0)<0, \quad \psi'(R_0)<0.
\end{equation}
We aim to show that $\xi_0=8^{\frac{1}{2(1-\a)}}$. Similarly as before, by using the equations for $\psi$ and $\psi^*$ it is not difficult to check that,
\begin{equation} \label{sign}
	r\left( \frac{\psi^*}{\psi} \right)'\!(r)\psi^2(r)= (1-K^*)\int_0^{r} |s|^{-2\a}e^{U_{\a}(s)}\psi^*(s)\psi(s)s\,ds.
\end{equation}
Suppose by contradiction $\xi_0<8^{\frac{1}{2(1-\a)}}$. Then, since $K^*\leq1$, the right-hand side in \eqref{sign} is non-negative for $r\leq\xi_0$ and hence $\frac{\psi^*(r)}{\psi(r)}$ is non-decreasing for $r\leq\xi_0$. It follows that, 
$$
	0<\frac{\psi^*(0)}{\psi(0)}\leq \frac{\psi^*(\xi_0)}{\psi(\xi_0)}=0,
$$
which is a contradiction.  Now, analogous computations as before yield,
$$
	R_0\left( \frac{\psi^*}{\psi} \right)'\!(R_0)\psi^2(R_0)-r\left( \frac{\psi^*}{\psi} \right)'\!(r)\psi^2(r)= (1-K^*)\int_r^{R_0} |s|^{-2\a}e^{U_{\a}(s)}\psi^*(s)\psi(s)s\,ds.
$$
Observe that by \eqref{psi*} and \eqref{psi} we have,
\begin{equation} \label{c1}
	(\psi^*)'(R_0)\psi(R_0)-\psi'(R_0)\psi^*(R_0)<0.
\end{equation}
It follows that,
\begin{equation} \label{sign2}
	-r\left( \frac{\psi^*}{\psi} \right)'\!(r)\psi^2(r)= (1-K^*)\int_r^{R_0} |s|^{-2\a}e^{U_{\a}(s)}\psi^*(s)\psi(s)s\,ds+C_0,
\end{equation}
for some $C_0>0$. Suppose now by contradiction $\xi_0>8^{\frac{1}{2(1-\a)}}$. Then, the righ-hand side in \eqref{sign2} is positive for $r\geq\xi_0$ and hence $\frac{\psi^*(r)}{\psi(r)}$ is decreasing for $r\geq\xi_0$. Therefore, recalling \eqref{psi*} and \eqref{psi},
$$
	0=\frac{\psi^*(\xi_0)}{\psi(\xi_0)}>\frac{\psi^*(R_0)}{\psi(R_0)}>0,
$$
which is a contradiction again. We conclude that necessarily $\xi_0=8^{\frac{1}{2(1-\a)}}$ and, in particular, 
\begin{equation} \label{sign3}
	\psi^*(r)\psi(r)>0 \quad \mbox{for all } r\neq8^{\frac{1}{2(1-\a)}}.
\end{equation}	
Finally, by using once more \eqref{sign2} with $r=0$ we have,
\begin{equation} \label{c2}
	(1-K^*)\int_0^{R_0} |s|^{-2\a}e^{U_{\a}(s)}\psi^*(s)\psi(s)s\,ds=-C_0<0.
\end{equation}
Since \eqref{sign3} holds true, then the latter inequality implies $K^*>1$ which is a contradiction. This concludes the proof of Case 1 for $\rho<8\pi(1-\a)$.

\medskip

Consider now the case $\rho=8\pi(1-\a)$ and $B_{R_0}=\R^2$.
The same argument adopted above in this situation shows that 
the quantity in \eqref{c1} vanishes as $R_0\to+\infty$. 
Then we have $C_0=0$ in \eqref{sign2} and \eqref{c2} which in turn imply that $K^*=1$. We skip the details to avoid repetitions and 
refer to \cite{CCL} for more details concerning this point. Therefore, the inequality in \eqref{rearr-ineq2} turns out to be an equality. 
This yields to the equality in \eqref{grad} as well and, in particular, to the equality in the Alexandrov-Bol inequality \eqref{bol-ineq}. In particular,
$$
\left( \int_{\{\phi=t\}} \left(h(x)e^{w}\right)^{\frac 12}\,d\s \right)^2 = \frac 12 \left( \int_{\O_t} h(x)e^w\,dx \right)\left( 8\pi(1-\a)-\int_{\O_t} h(x)e^w\,dx \right),
$$
for a.e. $t>c_0$. However, the left-hand side of the latter equality is uniformly strictly positive, while the right-hand side tends to zero for $t\to c_0^+$. 
This yields to a contradiction in the case $\rho=8\pi(1-\a)$ as well.

\medskip

\textbf{Case 2.} Suppose now that $\wtilde\O_-\neq\emptyset$. In this case we start by considering the rearrangement $\phi^*$ of $\phi$ in $\wtilde\O_+$. In particular, 
$$
	\int_{B_{R_0^+}}|x|^{-2\a}e^{U_{\a}}\,dx=\int_{\wtilde\O_+} h(x)e^w\,dx,
$$
for some $R_0^+$, while $R_0$ is given by \eqref{tot} where we recall that $R_0=+\infty$ if $\int_{\O} h(x)e^w\,dx=8\pi(1-\a)$ while $R_0<+\infty$ if 
$\int_{\O} h(x)e^w\,dx<8\pi(1-\a)$. 
On the other hand, in $\wtilde\O_-$ we will consider the annular, radial, decreasing, equimeasurable rearrangement $\phi^{**}$ of $\phi$. 
To this end, for any $t\in(t_-,c_0)$, $t_-=\min_\O \phi$, we let $R^-(t)$ be such that,
$$
	\int_{B_{R_0}\setminus B_{R^-(t)}}|x|^{-2\a}e^{U_{\a}}\,dx=\int_{\{\phi<t\}} h(x)e^w\,dx,
$$
Then, $\phi^{**}:B_{R_0}\setminus \ov B_{R^+_0}\to\R$, for $y\in B_{R_0}\setminus \ov B_{R^+_0}$, $|y|=r$, is given by,
$$
	\phi^{**}(r)=\inf\{t\in(t_-,c_0)\,:\,R^-(t)<r\}.
$$ 
We have,
\begin{align*}
	&\int_{B_{R_0}\setminus \ov B_{R^+_0}}|x|^{-2\a}e^{U_{\a}}\,dx=\int_{\wtilde\O_-} h(x)e^w\,dx, \\
	&\int_{B_{R_0}\setminus \ov B_{R^+_0}}|x|^{-2\a}e^{U_{\a}}|\phi^{**}|^2\,dx=\int_{\wtilde\O_-} h(x)e^w|\phi|^2\,dx, \\	
	&\int_{B_{R_0}\setminus \ov B_{R^+_0}}|x|^{-2\a}e^{U_{\a}}\phi^{**}\,dx=\int_{\wtilde\O_-} h(x)e^w\phi\,dx, \\
	&\int_{B_{R_0}\setminus \ov B_{R^+_0}} |\n \phi^{**}|^2\,dx \leq \int_\O |\n \phi|^2\,dx-\int_{\wtilde\O_+} |\n \phi|^2\,dx.
\end{align*}
Finally, we let $\phi_*:B_{R_0}\to\R$ be the following radial function,
$$
	\phi_*(r)= \left\{ \begin{array}{ll}
												\phi^*(r), & r\in[0,R^+_0], \vspace{0.2cm}\\
												\phi^{**}(r), & r\in(R^+_0,R_0).
										 \end{array} \right.
$$
By using the properties of $\phi^*, \phi^{**}$ and of $\phi$ {(see also \eqref{gg2})} we obtain,
\begin{align*}
	\int_{B_{R_0}} |\n \phi_*|^2\,dx & = \int_{B_{R^+_0}} |\n \phi^*|^2\,dx + \int_{B_{R_0}\setminus \ov B_{R^+_0}} |\n \phi^{**}|^2\,dx \\
				& \leq \int_{\wtilde\O_+} |\n \phi|^2\,dx + \int_\O |\n \phi|^2\,dx-\int_{\wtilde\O_+} |\n \phi|^2\,dx \\
				& = \int_\O |\n \phi|^2\,dx = \int_{\O} h(x)e^w|\phi|^2\,dx \\
				& = \int_{B_{R^+_0}}|x|^{-2\a}e^{U_{\a}}|\phi^{*}|^2\,dx+\int_{B_{R_0}\setminus \ov B_{R^+_0}}|x|^{-2\a}e^{U_{\a}}|\phi^{**}|^2\,dx \\
				& = \int_{B_{R_0}}|x|^{-2\a}e^{U_{\a}}|\phi_*|^2\,dx.
\end{align*}
Thus, we conclude that,
\begin{equation} \label{rearr-ineq3}
	\int_{B_{R_0}} |\n \phi_*|^2\,dx-\int_{B_{R_0}}|x|^{-2\a}e^{U_{\a}}|\phi_*|^2\,dx\leq0.
\end{equation}
Moreover, we have,
$$
	\int_{B_{R_0}}|x|^{-2\a}e^{U_{\a}}\phi_*\,dx=0.
$$
At this point observe that, defining  $K^*$ as in \eqref{K}, 
by construction and by the property \eqref{rearr-ineq3} of $\phi_*$, we obtain once more $K^*\leq 1$ for any $\rho\leq8\pi(1-\a)$.

\medskip

We distinguish now between two cases. Suppose first that $\rho<8\pi(1-\a)$. In this situation we follow step by step the argument   
in Case 1 (starting from \eqref{zero}) to conclude that $K^*>1$, which is a contradiction. Suppose now that $\rho=8\pi(1-\a)$ and argue once more as in 
Case 1 to conclude that $K^*=1$.
Therefore, the inequality in \eqref{rearr-ineq3} turns out to be an equality. This yields to the equality in \eqref{grad} as well and, 
in particular, to the equality in the Alexandrov-Bol inequality \eqref{bol-ineq}. We conclude that necessarily $\O_t$ and $\{\phi<t\}$ are simply-connected for a.e. $t>c_0$. 
But clearly $\{\phi<t\}$ is not simply-connected for $t<c_0$. This yields a contradiction and the proof is completed.\\
\end{proof}

\medskip

\section{The proof of Theorem \ref{thm}} \label{sec-proof}  

In this section we start by proving some new uniform estimates for solutions to \eqref{eq2} and then finally deduce the 
main result of Theorem \ref{thm} by making use of the spectral estimates introduced in the previous section.

\medskip

The following estimates are the counterpart of the well know results obtained in \cite{bm, bt, ls} and \cite{BM3} for the model case \eqref{eq}-\eqref{V} in the 
subcritical region. 
However it seems not easy to adapt those results to the general case in \eqref{eq2}. Thus we will adopt a different method 
based on rearrangement arguments in the same spirit of \cite{band,bc2}.
\begin{pro} \label{pro:est}
Let $\O_0\subset\R^2$ be a simple domain according to Definition \ref{def-simple}. Let $x_0\in\O_0$ be fixed as in Remark \ref{remxz} and let 
$w\in W^{2,s,\mbox{\rm \scriptsize loc}}(\O_0\setminus\{x_0\})\cap W^{2,q}(\O_0)\cap C^0(\ov\O_0)$ for some $s>2$ and some $q>1$, satisfy
$$
	\D w + h(x)e^{w}= 0  \mbox{ in } \O_0,
$$
where $h=e^H$ is such that $\a(\O_0,h)$ (as defined in \eqref{a}) satisfies $\a(\O_0,h)<1$.
Let $\omega\subseteq \O_0$ be any open subdomain such that $\p\o$ is a finite union of rectifiable Jordan curves and let $\a(\o)=\a(\o,h)$.  Suppose
$$
	M(\o)=\int_\o h(x)e^{w}\,dx<8\pi(1-\a(\o)).
$$
Then,
\begin{equation} \label{est-max}
	\max_{\ov\o}e^w\leq\left( 1-\frac{M(\o)}{8\pi(1-\a(\o))} \right)^{-2} \max_{\p\o}e^w.
\end{equation}
In particular, for any $\e\in(0,8\pi(1-\a(\O_0)))$, there exists $C_\e>0$ such that
\begin{equation} \label{unif}
	\|u_\rho\|_{L^{\infty}(\O_0)}\leq \rho \,C_\e,
\end{equation}
for any $\rho\in\bigr(0,8\pi(1-\a(\O_0))-\e\bigr)$ and for any solution $u_\rho\in W^{2,s,\mbox{\rm \scriptsize loc}}(\O_0\setminus\{x_0\})\cap W^{2,q}(\O_0)\cap C^0(\ov\O_0)$ of \eqref{eq2} with $\O=\O_0$. 
\end{pro}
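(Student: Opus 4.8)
The plan is to reduce \eqref{est-max} to a sharp pointwise bound on $\eta:=w-g$, where $g\in C^0(\ov\o)$ is the harmonic function in $\o$ with boundary values $g=w$ on $\p\o$, and then to extract that bound from the one--dimensional machinery already set up in the proof of Proposition~\ref{bol}. Running that construction with $\o$ in place of $\O_0$, we have $\eta>0$ in $\o$ and $\eta=0$ on $\p\o$, together with the quantities $\mu(t)=\int_{\{\eta>t\}}h e^{g}\,dx$, its inverse $\eta^*$, and $F(s)=\int_0^s e^{\eta^*(\l)}\,d\l$, which satisfy $F(0)=0$, $F(\mu(0))=M(\o)$, $F'(s)=e^{\eta^*(s)}$, $F'(\mu(0))=1$, and $\lim_{s\to0^+}F(s)/s=e^{\sup_\o\eta}$. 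Since $g$ is harmonic, $g\leq\max_{\p\o}w$ on $\ov\o$, hence $w\leq\eta+\max_{\p\o}w$ on $\ov\o$, so that $\max_{\ov\o}e^w\leq e^{\sup_\o\eta}\max_{\p\o}e^w$; therefore it is enough to prove that $e^{\sup_\o\eta}\leq\left(1-M(\o)/(8\pi(1-\a(\o)))\right)^{-2}$.

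For the latter I would use, writing $2a:=8\pi(1-\a(\o))$ and recalling $M(\o)<2a$, the pointwise differential inequality
\[
	sF'(s)\ \geq\ F(s)\left(1-\frac{F(s)}{2a}\right)\qquad\mbox{for a.e. }s\in(0,\mu(0)),
\]
which is exactly the content of the estimate $P(s)\geq0$ established inside the proof of Proposition~\ref{bol} (there $P$ is shown to be non-decreasing with $P(0)=0$), and in which the factor $1-F/(2a)$ is strictly positive throughout. Dividing by $F(1-F/(2a))$ and using partial fractions, the left--hand side becomes the derivative of $\ln\bigl(F/(1-F/(2a))\bigr)$; integrating over $[\e,\mu(0)]$, exponentiating, and letting $\e\to0^+$ (routine, given the local Lipschitz regularity of $F$ and $\eta^*$ and the identity $\lim_{\e\to0^+}F(\e)/\e=e^{\sup_\o\eta}$) gives
\[
	\mu(0)\,e^{\sup_\o\eta}\ \leq\ \frac{M(\o)}{1-M(\o)/(2a)}.
\]
I expect the one genuinely delicate point to be that this inequality, taken by itself, is too weak, because the ratio $M(\o)/\mu(0)$ is a priori uncontrolled. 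It must be paired with the companion lower bound $\mu(0)\geq M(\o)\left(1-M(\o)/(2a)\right)$, obtained by evaluating the \emph{same} differential inequality at $s=\mu(0)$ where $F'(\mu(0))=1$ --- equivalently, this is the inequality $2a(\mu(0)-M(\o))+M(\o)^2\geq0$ which is explicitly derived in the proof of Proposition~\ref{bol}. Substituting this lower bound into the previous display makes the factor $1-M(\o)/(2a)$ appear squared in the denominator, giving $e^{\sup_\o\eta}\leq(1-M(\o)/(2a))^{-2}$, which is \eqref{est-max}.

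To conclude with \eqref{unif}, I would apply \eqref{est-max} with $\o=\O_0$ and $w=u_\rho$, after normalizing the weight: setting $\wtilde h:=\rho\,h/\int_{\O_0}h e^{u_\rho}\,dx$, one has $\D u_\rho+\wtilde h\,e^{u_\rho}=0$ with $\int_{\O_0}\wtilde h\,e^{u_\rho}\,dx=\rho$, and $\a(\O_0,\wtilde h)=\a(\O_0,h)=:\a$ since multiplying $h$ by a positive constant leaves $\mu_+=-\D H$ unchanged; the hypothesis $M(\O_0)=\rho<8\pi(1-\a)$ holds because $\rho\leq8\pi(1-\a)-\e$. Since $u_\rho=0$ on $\p\O_0$, the right--hand side of \eqref{est-max} equals $(1-\rho/(8\pi(1-\a)))^{-2}$, whence $u_\rho\leq-2\ln(1-\rho/(8\pi(1-\a)))$ on $\ov\O_0$; on the other hand $u_\rho$ is superharmonic, so $u_\rho\geq0$. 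Since $1-\rho/(8\pi(1-\a))\geq\e/(8\pi(1-\a))$ for $\rho\leq8\pi(1-\a)-\e$, the elementary inequality $-\ln(1-x)\leq x/(1-x)$ yields $\|u_\rho\|_{L^\infty(\O_0)}\leq2\rho/\e$, i.e. \eqref{unif} with $C_\e=2/\e$.
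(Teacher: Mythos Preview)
Your proof is correct and follows essentially the same route as the paper: both reduce \eqref{est-max} to the bound $\max_{\ov\o}e^\eta\leq(1-M(\o)/(8\pi(1-\a(\o))))^{-2}$ and extract it from the inequality $P(s)\geq0$ established in Proposition~\ref{bol}. The only cosmetic difference is that the paper introduces the auxiliary function $J(s)=s\bigl(1/F(s)-1/(8\pi(1-\a(\o)))\bigr)$, observes $J'=-P/(4\pi(1-\a(\o))F^2)\leq0$, bounds $J(s)$ from below by $(1-F(s)/(8\pi(1-\a(\o))))^2/F'(s)$, and then lets $s\to\mu(0)^-$; you instead integrate the differential inequality on $[\e,\mu(0)]$ and pair it with its endpoint value $\mu(0)\geq M(\o)(1-M(\o)/(2a))$, which is exactly $J(\mu(0))\leq J(0^+)$ together with $P(\mu(0))\geq0$. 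Your explicit derivation of \eqref{unif} with $C_\e=2/\e$ (which the paper leaves implicit) is fine.
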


\begin{proof}
Let $g$ and $\eta$ be the functions defined in the proof of Proposition \ref{bol} with $\eta=w-g$ and $\eta=0$ on $\p\o$, i.e. $g=w$ on $\p\o$. Let 
$$
	\O(t)=\{x\in\o \,:\, \eta(x)>t\}, \quad \mu(t)=\int_{\O(t)}h(x)e^g\,dx,
$$ 
and let $\eta^*$ be the rearrangement of $\eta$ as given in \eqref{eta*}. Finally, let $F(s)$ and $P(s)$ be defined as in the proof of Proposition \ref{bol} which we recall here for reader's convenience, 
$$
	F(s)=\int_{\O(\eta^*(s))} h(x)e^\eta e^g\,dx,
$$
$$
	P(s)=4\pi(1-\a(\o))\left(s F'(s)-F(s)\right)+\frac12 F^2(s).
$$
By \eqref{P} we have, for a.a. $s\in(0,\mu(0))$,
\begin{equation} \label{est-P}
	P(s)\geq P(0)=0.
\end{equation}
Let us introduce now, for a.a. $s\in(0,\mu(0))$,
$$
	J(s)=s\left( \frac{1}{F(s)}-\frac{1}{8\pi(1-\a(\o))} \right),
$$
and observe that $J(s)>0$ since by assumption, $F(s)\leq F(\mu(0))=M(\o)<8\pi(1-\a(\o))$. Moreover,
$$
	J'(s)=-\frac{P(s)}{4\pi(1-\a(\o))F^2(s)}\leq0
$$
by \eqref{est-P}. Thus, $J$ is non-increasing and in particular, for a.a. $s\in(0,\mu(0))$ we have, 
$$
	J(s)\leq\lim_{s\to0^+} J(s)
$$
which, by the l'Hopital theorem reads,
$$
	\lim_{s\to0^+} J(s) = \lim_{s\to0^+} \frac{s}{F(s)}= \lim_{s\to0^+}\frac{1}{F'(s)}= \frac{1}{F'(0)}\,.
$$ 
Recalling \eqref{F} we have $F'(0)=\max_{\ov\o}e^\eta$ and hence, for any a.a. $s\in(0,\mu(0))$ we obtain,
\begin{equation} \label{max}
	\max_{\ov\o}e^\eta \leq \frac{1}{J(s)}\,.
\end{equation}
We now estimate $J(s)$ by using \eqref{est-P} once more which is equivalent to, for a.a. $s\in(0,\mu(0))$,
$$
	sF'(s)\geq F^2(s)\left( \frac{1}{F(s)}-\frac{1}{8\pi(1-\a(\o))} \right).
$$
Recalling that $J(s)>0$ and $F'(s)>0$ we exploit the latter estimate to deduce, for a.a. $s\in(0,\mu(0))$,
\begin{align*}
	J(s) & = \frac{sF'(s)}{F'(s)}\left( \frac{1}{F(s)}-\frac{1}{8\pi(1-\a(\o))} \right) \\
			 & \geq \frac{F(s)}{F'(s)}\left( \frac{1}{F(s)}-\frac{1}{8\pi(1-\a(\o))} \right)^2 \\
			 & = \frac{1}{F'(s)}\left( 1-\frac{F(s)}{8\pi(1-\a(\o))} \right)^2 .
\end{align*}
Going back to \eqref{max} we obtain, for a.a. $s\in(0,\mu(0))$,
$$
	\max_{\ov\o}e^\eta \leq \frac{1}{F'(s)}\left( 1-\frac{F(s)}{8\pi(1-\a(\o))} \right)^{-2} .
$$
Finally, letting $s\to\mu(0)^-$ we eventually get, 
$$
	\max_{\ov\o}e^\eta \leq \left( 1-\frac{M(\o)}{8\pi(1-\a(\o))} \right)^{-2} .
$$
On the other hand, recall that $\eta=w-g$ with $g$ harmonic and $g=w$ on $\p\o$. Thus, by making use also of the weak maximum principle we obtain,
\begin{align*}
	\max_{\ov\o} e^w & =\max_{\ov\o} e^{\eta+g} \leq \left( 1-\frac{M(\o)}{8\pi(1-\a(\o))} \right)^{-2} \max_{\ov\o} e^g \\
				& = \left( 1-\frac{M(\o)}{8\pi(1-\a(\o))} \right)^{-2} \max_{\p\o} e^g \\
				& = \left( 1-\frac{M(\o)}{8\pi(1-\a(\o))} \right)^{-2} \max_{\p\o} e^w,
\end{align*}
which is the desired estimate in \eqref{est-max}.
\end{proof}

\medskip

We can now prove the main theorem.

\begin{proof}[Proof of Theorem \ref{thm}.]
We first aim to show that the linearized equation for \eqref{eq2} has strictly positive first eigenvalue for any $\rho\leq 8\pi(1-\a)$, where $\a=\a(\O,h)<1$ is given in \eqref{a}. Indeed, suppose by contradiction that there exist a solution $u$ for \eqref{eq2} with $\rho\leq 8\pi(1-\a)$ and a non-trivial eigenfunction $\tilde\phi\in H^1_0(\O)$, that is, 
$$
\left\{ \begin{array}{ll}
 -\D\tilde\phi-\rho\dfrac{h(x)e^u}{\int_\O h(x)e^u\,dx}\left( \tilde\phi-\dfrac{\int_\O h(x)e^u\tilde\phi\,dx}{\int_\O h(x)e^u\,dx} \right)=0 & \mbox{in } \O, \vspace{0.2cm}\\
 \tilde\phi=0 & \mbox{on } \p \O.
\end{array}
\right.
$$
Letting 
$$
	w=u+\log\rho-\log\left(\int_\O h(x)e^u\,dx\right), \quad \phi=\tilde\phi-\dfrac{\int_\O h(x)e^u\tilde\phi\,dx}{\int_\O h(x)e^u\,dx}\,,
$$
we have,
$$
\left\{ \begin{array}{ll}
-\D\phi -h(x)e^w\phi=0 & \mbox{in } \O, \vspace{0.2cm}\\
 \phi=c_0 & \mbox{on } \p \O, \vspace{0.2cm}\\
\int_\O h(x)e^w\phi\,dx=0,
\end{array}
\right.
$$
for some $c_0\in\R$. Without loss of generality we may assume $c_0\leq0$. The goal is to show that $\phi\equiv c_0$ which in turn implies $\tilde\phi\equiv 0$ yielding a contradiction.

\medskip

Observe that $\int_\O h(x)e^w\phi\,dx=0$. 
Therefore, if $c_0=0$, $\phi$ must change sign unless $\phi\equiv 0$. 
{Then, by Lemma \ref{lem-nodal}, $\phi$ is an eigenfunction for \eqref{lin1} corresponding to an eigenvalue $\hat\nu_k$ for some $k\geq2$.} 
On the other hand, Proposition \ref{pro-nodal} $(ii)$ implies $\hat\nu_2>0$. Then, we necessarily have $\phi\equiv 0$ as claimed. 
We are left with the case $c_0<0$. But in this case we may apply Proposition \ref{pro-nodal2} which asserts that $\hat\nu>0$ which is a contradiction.

\medskip

Therefore, we conclude that the linearized operator $L_\rho$ for \eqref{eq2} has strictly positive first eigenvalue for any $\rho\leq8\pi(1-\a)$. 
Let now $S_\rho$ be the branch of solutions for \eqref{eq2} bifurcating from $(u,\rho)=(0,0)$. 
By standard bifurcation theory \cite{cr-rab}, we deduce that $S_\rho$ is a simple branch near $\rho=0$. 
In particular, for any $\rho>0$ small enough, there exists a unique solution for \eqref{eq2}. 
Moreover, since $L_\rho$ has strictly positive first eigenvalue we can apply the implicit function theorem to extend 
uniquely $S_\rho$ for any $\rho<8\pi(1-\a)$. Suppose by contradiction there exists a second (non-bending) branch of solutions for $\rho<8\pi(1-\a)$. 
Then, the estimates in \eqref{unif} implies that the latter branch intersects $S_\rho$ in $(u,\rho)=(0,0)$, which can not happen. 
We conclude uniqueness of solutions for \eqref{eq2} holds for $\rho<8\pi(1-\a)$. 
Finally, assume by contradiction that there exist more than on solution for $\rho=8\pi(1-\a)$. Since we can apply the implicit function theorem around each one of 
these solutions, then we readily obtain a contradiction to the uniqueness for $\rho<8\pi(1-\a)$.
\end{proof}

\

\end{document}